\newtheorem{theorem}{Theorem}[section]
\newtheorem{lemma}[theorem]{Lemma}
\newtheorem{cor}[theorem]{Corollary}
\theoremstyle{definition}
\def\mapnew#1{\smash{\mathop{\longrightarrow}\limits^{#1}}}
\def\mapnew#1{\smash{\mathop{\longrightarrow}\limits^{#1}}}
\definecolor{red}{rgb}{1,0,0}
\begin{document}

\title[]{The Fibonacci Lie algebra is not finitely presented }

\author{Dessislava H. Kochloukova}
\address
{Department of Mathematics, State University of Campinas (UNICAMP), 13083-859, Campinas, SP, Brazil}
\email{desi@ime.unicamp.br}

\author{Victor Petrogradsky}
\address{Department of Mathematics, University of Brasilia, 70910-900 Brasilia DF, Brazil}
\curraddr{}
\email{petrogradsky@rambler.ru}
\thanks{The first author was partially supported by
 CNPq 305457/2021-7 and  FAPESP
18/23690-6.
The second author was partially supported by grant FAPESP 2022/10889-4. }

\subjclass[2000]{
16S32, 
16E40, 
17B56, 
17B50, 
17B65, 
17B66, 
17B70} 
\keywords{restricted Lie algebras, homology of Lie algebras, finite presentation, growth, self-similar algebras, nil-algebras, graded algebras,
Lie algebras of differential operators}

\dedicatory{}

\commby{}

\begin{abstract} We prove that the  Fibonacci Lie algebra
 and the related just infinite self-similar Lie algebra
are not finitely presented.
\end{abstract}

\maketitle


\bibliographystyle{amsplain}

\section{Introduction}

In this paper we study so called {\it Fibonacci Lie algebra} $L$ introduced by Petrogradsky in~\cite{Petr},
which is a natural analogue of the Grigorchuk group~\cite{Grigorchuk80}.
Further properties of this Lie algebra and the related restricted Lie algebra were studied in~\cite{Petr-Shest, Petr-Shest2}.
More fractal Lie algebras and superalgebras were constructed in~\cite{Bartholdi,PeOtto,Pe16,Shest-Zel,P-S-Z}.

There is a vast literature on self-similar groups.
A countable set of relations of the Grigorchuk group was found by Lysionok~\cite{Lys85},
it is obtained from a finite set by applying iterations of an endomorphism.
Later, such presentations were called finite {\it endomorphic presentations} or $L$-{\it presentations} by Bartholdi~\cite{Bartholdi03}.
The notion of self-similarity in case of Lie algebras was suggested and developed by Bartholdi~\cite{Bartholdi}.
Further relations of self-similarity with so called virtual endomorphisms were studied in~\cite{F-K-S}.

In this paper we show using homological methods that the Fibonacci Lie algebra is not finitely presented.
The conjecture that the {\it restricted} Fibonacci Lie algebra is infinitely presented was formulated in~\cite{Petr-Shest2}
along with an infinite set of relations obtained by iterations of an endomorphism  applied to a finite set.

It is well known that finite dimensional simple Lie algebras
over an algebraically closed field of characteristic zero are finitely presented, namely, they are determined by Serre's relations.
More generally,  the Kac-Moody algebras are defined by similar finitely many relations in terms of the Cartan matrix~\cite{Kac}.
The simple infinite dimensional Witt Lie algebra $W_1$ in case of characteristic zero is finitely presented~\cite{Stewart75}.
The fact that the positive part of $W_1$ is finitely presented was remarked in~\cite{Ufn80}.
More generally, simple infinite dimensional Cartan type Lie algebras $W_n,S_n,K_n,H_n$ in characteristic zero are finitely presented~\cite{LeiPol}.
Presentations of Lie algebras and close objects are extensively studied in terms of Gr\"obner-Shirshov bases,
in particular this approach was applied to simple finite dimensional Lie algebras and Kac-Moody algebras,
see \cite{BokChen07,BokChen14,BokKle96}.
Finitely presented Lie algebras and intermediate growth of their universal enveloping algebras were studied in~\cite{Kos15, Kos17}.

In the case of $\mathbb{N}$-graded Lie algebras finite presentability is equivalent with both the first and the second homology being finite dimensional.
In a recent paper using homological methods  Kochloukova and Martinez-Perez proved that if $L$ is a finitely presented
$\mathbb{N}$-graded Lie algebra without non-abelian free Lie subalgebras
then the derived Lie subalgebra $[L,L]$ is finitely generated.
Unfortunately this cannot be used to prove that  the Fibonacci Lie algebra is not finitely presented since its derived subalgebra is finitely generated,
see Lemma \ref{derived-fin-gen}.

At this stage we define the Fibonacci Lie algebra as $L = \langle v_1, v_2 \rangle$, where $v_1, v_2 \in Der(R)$
for $R = k[t_0, t_1, t_2, \ldots]/ (t_0^2, t_1^2, t_2^2, \ldots )$ and $k$ a field of characteristic $p = 2$ and
$v_i =  \partial_i + t_{i-1} ( \partial_{i+1} + t_i(\partial_{i+2} +  t_{i+1}(\partial_{i+3} + \ldots)))$,
where $\partial_i = \partial/ \partial t_i$.
We consider also a related Lie algebra $\widetilde L=\langle \partial_1,v_2\rangle\subset Der R$,
which is just-infinite~\cite{Petr-Shest2}.
This algebra was defined as a self-similar Lie algebra by Bartholdi~\cite{Bartholdi}.
The Lie algebras $L$ and $\widetilde{L}$ are $\mathbb{N}$-graded,
we describe their properties in more details in Section \ref{def-petro}.

\medskip
\noindent
{\bf Main Theorem.} {\it   Let $p = 2$. The Fibonacci Lie algebra $L$ and the related self-similar
 just infinite Lie algebra $\widetilde{L}$ are  not finitely presented. }

\medskip

Recently Futorny, Kochloukova and Sidki  constructed in \cite[Thm.~D]{F-K-S} examples of finitely presented,
infinite dimensional, metabelian,  self-similar Lie algebras.
In the case of groups Kochloukova and Sidki constructed in \cite{K-S} finitely presented  $S$-arithmetic self-similar groups of homological type $FP_n$.
In contrast the Schur multiplier of the Grigorchuk group is infinitely generated,
 in particular this explains why the Grigorchuk group is a self-similar group that is not finitely presented \cite{Grig99}.
Another famous self-similar group, the Gupta-Sidki group defined in \cite{G-S}, is also infinitely presented~\cite{Sidki87}.

\section{ Preliminaries on homological properties of Lie algebras}

\subsection{Basic homological algebra over associative ring $R$ with unity} \label{homology-theory}
We review some basic definition from homological algebra that work for an arbitrary associative ring $R$ with unity.
For more results the reader is referred to \cite{Rotman} and \cite{Weibel}.
A complex of $R$-modules
$${\mathcal M} : \ldots \longrightarrow  M_{i+1} \mapnew{ d_{i+1}} M_i \mapnew{d_i} M_{i-1} \mapnew{ d_{i-1}} \ldots
$$
is a sequence of homomorphisms such that $d_{i-1} d_i = 0$. The $i$-th homology group of ${\mathcal M}$ is
$H_i({\mathcal M}) = Ker (d_i) / Im (d_{i+1}).$
The complex is called exact if all homology groups are trivial.

Let $V$ be a right $R$-module, $W$ be a left $R$-module and
$${\mathcal P} : \ldots \to P_i \to P_{i-1} \to \ldots \to P_0 \to V \to 0
$$ be a projective resolution of right $R$-modules  (i.e. the sequence is exact and $P_i$ is projective for all $i \geq 0$)
and
$${\mathcal S} : \ldots \to S_i \to S_{i-1} \to \ldots \to S_0 \to W \to 0
$$ be a projective resolution of left $R$-modules (i.e. the sequence is exact and $S_i$ is projective for all $i \geq 0$).
Note that free modules are projectives, so it is natural to consider free resolutions too.
The deleted projective resolution ${\mathcal P}_V$ (resp. ${\mathcal S}_W$) is the complex obtained from ${\mathcal P}$ (resp. $\mathcal{S}$)
substituting $V$ ( resp. $W$) with the trivial module $0$. Then ${\mathcal P}_V \otimes_R  W$ and $V \otimes_R {\mathcal S}_W$ are both  complexes.
One of the basic facts in homological algebra (see \cite{Rotman}) is
that $H_i({\mathcal P}_V \otimes_R  W) \simeq H_i(V \otimes_R {\mathcal S}_W)$
and by definition
$Tor^R_i(V, W) = H_i({\mathcal P}_V \otimes_R  W) \simeq H_i(V \otimes_R {\mathcal S}_W)$
is an abelian group i.e. $\mathbb{Z}$-module. Note that
$Tor^R_0(V, W)  = V \otimes_R W.$

\subsection{Basic homological algebra for Lie algebras} \label{Lie-homology}
Let $L$ be an arbitrary Lie algebra over a field $k$ and $W$ be a left $U(L)$-module.
Then by definition the $i$-th homology of $L$ with coefficients in $W$ is
$$ H_i(L, W) : = Tor_i^{U(L)}(k, W),$$
where $k$ is the basic field considered as a trivial right $U(L)$-module (i.e.  $L$ acts as 0).
If $W$ is a right $U(L)$-module
$$ H_i(L, W) = Tor_i^{U(L)}(W,k),$$
where $k$ is the  basic field considered as a trivial left $U(L)$-module.

Consider the  Chevalley-Eilenberg  free resolution of the trivial right $U(L)$-module $k$
$$
{\mathcal L} : \ldots \to \wedge^{n} L \otimes  U(L) \mapnew{\partial_n} \wedge^{n-1} L \otimes U(L) \to \ldots \to L \otimes U(L) \to U(L) \to k \to 0
$$
with differential
$$
\partial_n( x_1 \wedge \ldots \wedge x_n \otimes h) = $$
$$\sum_{1 \leq j \leq n}\!\!\! (-1)^{j+1} x_1 \wedge \ldots \widehat{x_j} \ldots \wedge x_n \otimes x_j h
+\!\!\!\!\sum_{1 \leq i < j \leq n}\!\!\! (-1)^{i+j} [x_i, x_j] \wedge x_1 \wedge \ldots \widehat{x_i} \ldots \widehat{x_j} \ldots \wedge x_n \otimes h$$
where $x_1, \ldots, x_n \in L, h \in U(L)$. Recall that ${\mathcal L}_k$ is the complex obtained from ${\mathcal L}$
substituting the trivial $U(L)$-module $k$ with the zero module.
Thus
$$
{\mathcal L_k} {\otimes_{ U(L)}} k : \ldots \to \wedge^n L \mapnew{d_n} \wedge^{n-1} L \mapnew{d_{n-1}} \ldots \to L \to  k \to 0
$$
has differential
$$
d_n( x_1 \wedge \ldots \wedge x_n ) =
\sum_{1 \leq i < j \leq n}(-1)^{i+j} [x_i, x_j] \wedge x_1 \wedge \ldots \widehat{x_i} \ldots \widehat{x_j} \ldots \wedge x_n $$
The $n$th-homology of $L$ is
$
H_n(L,k) = Ker (d_n) / Im (d_{n+1}).
$
The definition of homology implies for an arbitrary left $U(L)$-module $V$ that
$
H_0(L, V) \simeq k \otimes_{U(L)} V = V / L V = : V_L,
$
called {\it coinvariants.}
We write
$ H^0(L,V)$ for  $\{ v \in V \mid lv = 0 \hbox{ for } l \in L \} = : V^L,$
called {\it invariants.}
The same works for an arbitrary right $U(L)$-module $V$ i.e.
$
H_0(L, V) = V \otimes_{U(L)} k = V / V L = : V_L.
$
We write
$
 H^0(L,V)$ for $ \{ v \in V \mid vl = 0 \hbox{ for } l \in L \} = : V^L.$

\begin{lemma} \label{duality} Let $Q$ be a one dimensional Lie algebra and $V$ be a   $U(Q)$-module.
   Then there is a natural isomorphism
	$\tau_V : H_1(Q, V) \to  H^0(Q, V)$. The naturality means that if $f : V \to W$ is a homomorphism
of $U(Q)$-modules then for the induced maps $f_1 : H_1(Q, V) \to H_1(Q, W)$ and $f_2: H^0 (Q, V) \to H^0(Q, W)$
we have that $\tau_W \circ f_1 = f_2 \circ \tau_V$.
	\end{lemma}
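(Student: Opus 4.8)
The plan is to exploit the fact that a one-dimensional Lie algebra $Q = kx$ is abelian, so that $U(Q) \cong k[x]$ is a polynomial ring and the Chevalley--Eilenberg resolution recalled above truncates immediately. Since $\dim Q = 1$ we have $\wedge^n Q = 0$ for every $n \geq 2$, so the free resolution of the trivial right $U(Q)$-module $k$ collapses to the two-term complex
$$0 \to Q \otimes U(Q) \mapnew{\partial_1} U(Q) \to k \to 0,$$
where the differential formula specializes to $\partial_1(x \otimes h) = xh$. Injectivity of $\partial_1$ is automatic here, since under $Q \otimes U(Q) \cong U(Q)$ it becomes multiplication by $x$ in the domain $k[x]$; this is consistent with the sequence being a resolution.

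First I would delete the trivial module and tensor on the right with $V$ over $U(Q)$. Using the canonical isomorphisms $U(Q) \otimes_{U(Q)} V \cong V$ and $Q \otimes U(Q) \otimes_{U(Q)} V \cong Q \otimes V$, the complex $\mathcal{L}_k \otimes_{U(Q)} V$ becomes
$$0 \to Q \otimes V \mapnew{d} V \to 0$$
concentrated in homological degrees $1$ and $0$. Tracking $\partial_1 \otimes 1$ through these identifications shows that $d(x \otimes w) = xw$ is exactly the action of the generator $x$ on $V$. Consequently $H_1(Q,V) = \ker d = \{\, x \otimes w : xw = 0 \,\}$, and since every element of $Q \otimes V$ is uniquely of the form $x \otimes w$, the assignment $x \otimes w \mapsto w$ identifies $\ker d$ with $\{\, w \in V : xw = 0 \,\} = V^Q = H^0(Q,V)$. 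I would define $\tau_V$ to be this identification. As a byproduct one also reads off $H_0(Q,V) = V/xV = V_Q$.

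It remains to check naturality, which is the only place where any bookkeeping is needed, though no genuine difficulty arises. Given a homomorphism $f : V \to W$ of $U(Q)$-modules, $f$ commutes with multiplication by $x$, hence with the differentials $d$, so it induces $f_1$ on $H_1$ via the map $1 \otimes f$ on $Q \otimes V$; likewise $f$ restricts to $f_2 : V^Q \to W^Q$ because it preserves the condition $xw = 0$. Since the identifying maps $x \otimes w \mapsto w$ are visibly compatible with $1 \otimes f$ and with $f$, the relation $\tau_W \circ f_1 = f_2 \circ \tau_V$ follows at once. The same argument applies verbatim to a right $U(Q)$-module $V$, replacing $xw$ by $wx$ throughout, which handles both conventions allowed by the statement. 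The principal point to get right is thus the computation of $d$ as the action map; everything else is formal.
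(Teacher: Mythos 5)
Your proof is correct and is essentially the same as the paper's: the paper writes down the two-term free resolution $0 \to U(Q) \to U(Q) \to k \to 0$ with the first map being multiplication by a fixed nonzero $q \in Q$, which is exactly your truncated Chevalley--Eilenberg resolution after identifying $Q \otimes U(Q) \cong U(Q)$, and both arguments conclude $H_1(Q,V) \cong \ker(\mu) = V^Q$. Your explicit verification of naturality is a welcome addition that the paper leaves implicit.
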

\begin{proof}
	Let $\mu : V \to V$ be the map that sends an element to its image under that action of a fixed element $q \in Q \setminus \{ 0 \}$.
	Consider the free resolution of the trivial $U(Q)$-module $k$
	$${\mathcal{P}} : 0 \to U(Q) \mapnew{\alpha} U(Q) \mapnew{\epsilon} k \to 0
	$$
	where the map $\alpha$ is multiplication by $q$ and the map $\epsilon$ is the augmentation map. Denote by  ${\mathcal{P}}_k$
the deleted complex obtained from $\mathcal{P}$ by substituting the module $k$ in dimension -1 with the zero module. Then we get a complex
	$${\mathcal P}_k\otimes_{U(Q)} V : 0 \to V \mapnew{\mu} V \to 0$$
	and $H_1(Q, V) = H_1( {\mathcal P}_k \otimes_{U(Q)} V ) \simeq Ker (\mu) = V^Q =  H^0(Q, V).$
\end{proof}

\subsection{$\mathbb{N}$-graded Lie algebras} 

We write $\mathbb{N}$ for the set of positive integers. A Lie algebra $L$ is $\mathbb{N}$-graded if it has a decomposition
$$
L = L_1\oplus L_2 \oplus \ldots \oplus L_n \oplus \ldots,\qquad \hbox{ where } [L_i, L_j] \subseteq L_{i+j},\ i,j\ge 1.$$
A $\mathbb{N}$-graded $L$-module $V$ has a decomposition
$$V = V_1\oplus V_2 \oplus \ldots \oplus V_n \oplus \ldots,\qquad \hbox{ where } V_i L_j \subseteq V_{i + j},\ i,j\ge 1. $$

The following result is known, but as we could not find a reference, we include a proof.

\begin{lemma} \label{Ngraded}
Let $L$ be a $\mathbb{N}$-graded Lie algebra. Then

a) $L$ is finitely generated if and only if $H_1(L,k)$ is finite dimensional.

b) $L$ is finitely presented if and only if both $H_1(L,k)$ and $H_2(L,k)$ are  finite dimensional.
\end{lemma}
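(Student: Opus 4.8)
The plan is to first identify $H_1(L,k)$ with the abelianization of $L$ and then control $H_2(L,k)$ through a minimal graded presentation, using in both cases a graded Nakayama argument.

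Step 1. Reading off the differentials of the complex ${\mathcal L}_k \otimes_{U(L)} k$ displayed above, the map $d_1 : L \to k$ is zero (the sum defining $d_n$ is empty for $n=1$) and $d_2(x \wedge y) = -[x,y]$, so
\[ H_1(L,k) \cong L/[L,L]. \]
To prove (a) I would establish a graded Nakayama lemma: for a homogeneous subset $X \subseteq L$, the images of $X$ span $L/[L,L]$ if and only if $X$ generates $L$. The nontrivial direction is by induction on the degree $n$; since $[L,L] \subseteq \bigoplus_{m \geq 2} L_m$, one has $(L/[L,L])_n = L_n/[L,L]_n$ with $[L,L]_n = \sum_{i+j=n,\, i,j\geq 1}[L_i,L_j]$, and every bracket occurring here has both factors of degree $<n$, hence lies in the subalgebra generated by $X$ by the inductive hypothesis. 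Since the homogeneous components of a finite generating set again form a finite generating set, it follows that $L$ is finitely generated iff $\dim_k L/[L,L] = \dim_k H_1(L,k) < \infty$, which is (a).

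Step 2. For (b), assume $H_1(L,k)$ is finite-dimensional, so by (a) there is a finite homogeneous subset $X = \{x_1,\dots,x_d\} \subseteq L$ whose images form a basis of $L/[L,L]$. Let $F$ be the free Lie algebra on $X$, graded so that $F \twoheadrightarrow L$ is a homomorphism of $\mathbb{N}$-graded Lie algebras, and let $R$ be its kernel, a homogeneous ideal. I would then invoke the five-term exact sequence in Lie algebra homology attached to $0 \to R \to F \to L \to 0$ (see \cite{Weibel}),
\[ H_2(F,k) \to H_2(L,k) \to R/[F,R] \to H_1(F,k) \to H_1(L,k) \to 0, \]
together with the vanishing $H_2(F,k)=0$ for a free Lie algebra. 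Because $X$ is minimal, the map $H_1(F,k) = F/[F,F] \to L/[L,L] = H_1(L,k)$ is an isomorphism, so the sequence collapses to a Hopf-type formula
\[ H_2(L,k) \cong R/[F,R]. \]

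Step 3. A second graded Nakayama argument, now applied to the homogeneous ideal $R$ with its adjoint $F$-action, shows that a homogeneous subset $S \subseteq R$ generates $R$ as an ideal of $F$ iff the images of $S$ span $R/[F,R]$; the induction is the same as in Step 1, using $[F,R]_n = \sum_{i\geq 1}[F_i,R_{n-i}]$ and the identity $R = \mathrm{span}(S) + [F,R]$ valid for any ideal generating set $S$. Hence $R$ is finitely generated as an ideal iff $\dim_k R/[F,R] = \dim_k H_2(L,k) < \infty$, and in that case $L = F/R$ is finitely presented; this gives the implication ``$H_1,H_2$ finite-dimensional $\Rightarrow$ $L$ finitely presented''. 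For the converse I would start from an arbitrary finite presentation $L \cong F'/R'$ with $F'$ free of finite rank and $R'$ generated as an ideal by finitely many $r_1,\dots,r_m$ (no grading is needed here): the same five-term sequence exhibits $H_1(L,k)$ as a quotient of the finite-dimensional $F'/[F',F']$ and embeds $H_2(L,k)$ into $R'/[F',R']$, while $R' = \mathrm{span}(r_1,\dots,r_m) + [F',R']$ forces $\dim_k R'/[F',R'] \leq m$; both homology groups are thus finite-dimensional. The main obstacle I anticipate is Step 2: one must justify the five-term exact sequence and the vanishing $H_2(F,k)=0$ in this setting, and verify that minimality of $X$ genuinely makes $H_1(F,k) \to H_1(L,k)$ an isomorphism, so that $R/[F,R]$ is identified exactly with $H_2(L,k)$ rather than only up to a subquotient. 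The two graded Nakayama inductions are then routine once the gradings of $F$, $R$ and $[F,R]$ are set up carefully.
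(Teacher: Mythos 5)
Your proof is correct, but for part (b) it takes a genuinely different route from the paper's. For part (a) the two arguments agree in substance --- the identification $H_1(L,k)\cong L/[L,L]$ plus the graded Nakayama principle --- except that the paper merely asserts the Nakayama step while you prove it by induction on degree. For part (b) the paper outsources the work to cited results: it invokes \cite[Cor.~2.3]{K-MP} (for $\mathbb{N}$-graded $L$, type $FP_n$ is equivalent to $H_i(L,k)$ being finite dimensional for $i\le n$) and \cite[Lemma~2.4]{K-MP2} (type $FP_2$ is equivalent to the relation module $R/[R,R]$ being finitely generated over $U(L)$), and then finishes with a graded argument reducing this to $R/[R,F]$ being finite dimensional. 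You instead work directly with a minimal homogeneous presentation $L=F/R$, use the five-term exact sequence together with $H_2(F,k)=0$ for free $F$ to identify $H_2(L,k)\cong R/[F,R]$, and apply a second graded Nakayama induction; your converse direction, which the paper leaves implicit, is also handled correctly. The two ingredients you flag as needing justification are indeed the only external inputs and both are standard: the five-term sequence is the bottom of the Lyndon--Hochschild--Serre spectral sequence for Lie algebras (see \cite{Weibel}), and $H_2(F,k)=0$ holds because the augmentation ideal of $U(F)$ is a free $U(F)$-module, so $k$ admits a length-one free resolution; minimality of $X$ does make $H_1(F,k)\to H_1(L,k)$ an isomorphism (it sends a basis to a basis), so $H_2(L,k)$ is exactly $R/[F,R]$. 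Your approach buys a self-contained, elementary proof; the paper's buys brevity and, via its citations, the stronger $FP_n$ characterization for all $n$.
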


\begin{proof}

a) Since $L$ is  $\mathbb{N}$-graded, $L$ is finitely generated if and only if $L/ [L,L]$ is finite dimensional but $H_1(L,k) \simeq L/ [L,L]$.

b) Suppose $L$ is finitely generated and $H_2(L,k)$ is  finite dimensional.
 Recall that a Lie algebra $L$ is of  homological type $FP_n$ if the trivial  $U(L)$-module $k$  has a projective resolution with
all projectives in dimension $\leq n$ finitely generated.
By \cite[Cor. 2.3]{K-MP}  $L$ is of homological type $FP_n$ if and only if $H_i(L,k)$ is finite dimensional for $i \leq n$.
The case $n = 2$ was earlier proved in  \cite[section~2]{Thomas}.
Then $L$ is of type $FP_2$.

If $L \simeq F/ R$  is an isomorphism of $\mathbb{N}$-graded Lie algebras, where $F$ is a finitely generated,
free Lie algebra  that is equipped  with a $\mathbb{N}$-grading and $R$ is a $\mathbb{N}$-graded ideal of  $F$.
Then by \cite[Lemma 2.4]{K-MP2} $L$ is of type $FP_2$ if and only if $R/ [R,R]$ is finitely generated as $U(L)$-module.
Note that the $U(L)$-action on $R/ [R,R]$ is induced by the adjoint action of $F$ on $R$.
Finally since $R$ is a $\mathbb{N}$-graded ideal of $F$ we have that $R$ is finitely generated as an $U(F)$-module (via the adjoint action)
if and only if $R/ [R, F]$ is finite dimensional.
But the latter follows from the fact that $R/ [R,R]$ is finitely generated as $U(L)$-module and the former is equivalent to $L$ is finitely presented.
\end{proof}


\section{Properties of the Fibonacci Lie algebra $L$ and $\widetilde L$} \label{def-petro}

We recall basic properties of {\it Fibonacci algebra}
introduced in~\cite{Petr}, subsequently studied in more details  in~\cite{Petr-Shest,Petr-Shest2}.
By $k$ we denote the base field of characteristic 2.
Let $R:= k[t_i| i\ge 0 ]/ (t_i^2| i\ge 0 )$, the ring of
the truncated polynomials.
Denote derivations $\partial_i:=  \partial / \partial t_i\in Der (R)$, $i\ge 0$.
Define recursively $\{ v_i | i \geq 1 \} \subset Der (R)$ so called {\it pivot elements}:
$$ v_i = \partial_i + t_{i-1} v_{i+1}, \qquad i \geq 1. $$
Equivalently, one can consider explicit presentations as infinite sums
\begin{equation} \label{eqeq}
v_i =  \partial_i + t_{i-1} ( \partial_{i+1} + t_i(\partial_{i+2} + \ldots)),\qquad i\ge 1.
\end{equation}

Let $S$ be a subset in a Lie algebra. Then $\langle S\rangle $ denotes the Lie subalgebra generated by $S$.
Now let $L: = \langle v_1, v_2 \rangle$, the Lie subalgebra of $Der(R)$ generated by $\{v_1,v_2\}$
and referred to as the {\it Fibonacci Lie algebra}.
Commutators of the of pivot elements are~\cite{Petr}:
\begin{equation}\label{prod_pivot}
\begin{split}
&[v_i, v_{i+1}] = v_{i+2},\qquad\qquad i\ge 1;\\
&[v_i, v_j] = t_{i-1} t_i \cdots t_{j-3} v_{j+1}, \qquad 1\le i<j.
\end{split}
\end{equation}
The explicit basis of $L$ was determined in~\cite{Petr-Shest}
\begin{equation} \label{basisL}
\{ v_1, v_2, v_3,  t_0^{z_0} \ldots  t_i^{z_i} v_{i+4} \mid i \geq 0,\  z_0, \ldots z_i   \in \{ 0,1 \} \},
\end{equation}
Call by a {\it tail} a product  $$r_n:=t_0^{z_0}\cdots t_n^{z_n}$$ for $n\ge 0$.
In case $n<0$ consider that $r_n=1$.
So, basis monomials~\eqref{basisL} are shortly written as $\{ r_{n-4}v_n|n\ge 1 \}$.

A commutator of two arbitrary basis monomials~\eqref{basisL} is computed as follows, see~\cite{Petr-Shest}.
Let $1\le n \leq  m$, then using~\eqref{prod_pivot} we have
\begin{align} \label{com123}
[r_{n-4}v_n, r_{m-4}v_m] &=r_{n-4}v_n(r_{m-4})v_m+ r_{n-4}r_{m-4}[v_n, v_m].
\end{align}
Indeed, $v_m(r_{n-4})=0$ because $v_m$ contains $\partial_i$, with $i\ge m>m-4  \geq  n-4$ and $r_{n-4}$ has only $t_j$ with $j\le n-4$.
Note that $v_n(t_j) = t_{n-1} \ldots t_{j-2}$ for $j \geq n+1$, $v_n(t_n) = 1$ and $v_n(t_j) = 0$ for $j < n$.

A restricted Lie algebra generated by the same elements $v_1,v_2$ has a nil $p$-mapping~\cite{Petr},
thus providing a natural analogue of the Grigorchuk group in the class of restricted Lie algebras in characteristic 2.
The degrees of nilpotence of all elements are not bounded.
Nevertheless, $v_i^4=0$, $i\ge 1$~\cite{Petr}, hence
 $ad(v_i)^4=0$ for all $i\ge 1$.

Consider those basis monomials of $L$~\eqref{basisL} that contain factor $t_0$, namely:
\begin{equation} \label{basisA}
X_A := \{ t_0  t_1^{z_1} \ldots t_i^{z_i}  v_{i+4} \mid i \geq 0,  z_1, \ldots z_i \in \{ 0, 1 \}  \}.
\end{equation}
\begin{lemma} \label{Adef}
Let $A$ be a linear span of $X_A$.
Then $A$ is an abelian ideal of $L$.
\end{lemma}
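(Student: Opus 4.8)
The plan is to reduce everything to the commutator formula \eqref{com123} together with the derivation rules for the $v_n$ recorded just after it, and to track a single feature of each monomial: divisibility by $t_0$. By bilinearity of the bracket it suffices to test the defining conditions on the basis monomials \eqref{basisL}, and a basis monomial $r_{n-4}v_n$ lies in $A$ precisely when its tail $r_{n-4}$ is divisible by $t_0$, which is exactly the description of $X_A$ in \eqref{basisA}. The two facts that drive the whole argument are $t_0^2=0$ in $R$ and $v_m(t_0)=0$ for every $m\ge 1$ (since $0<m$), the latter saying that the distinguished factor $t_0$ is never removed by the derivations occurring in \eqref{com123}.

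For the ideal property I would take $u=r_{n-4}v_n\in X_A$ (so $t_0\mid r_{n-4}$) and an arbitrary basis monomial $w=r_{m-4}v_m$ of $L$, and evaluate $[u,w]$ via \eqref{com123}, distinguishing the two orderings. If $n\le m$, then $[u,w]=r_{n-4}v_n(r_{m-4})v_m+r_{n-4}r_{m-4}[v_n,v_m]$, and $r_{n-4}$ is a literal left factor of both summands, so each is divisible by $t_0$. If $n>m$, then $[u,w]=\pm\bigl(r_{m-4}v_m(r_{n-4})v_n+r_{m-4}r_{n-4}[v_m,v_n]\bigr)$; the second summand contains the factor $r_{n-4}$, while in the first the factor $t_0$ of $r_{n-4}$ survives the derivation $v_m$ because $v_m(t_0)=0$. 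In every case $[u,w]$ is a $k$-linear combination of basis monomials each divisible by $t_0$, hence lies in $A$; therefore $[A,L]\subseteq A$ and $A$ is an ideal.

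The abelian property is the same computation pushed one step further. Assume in addition $w=r_{m-4}v_m\in X_A$, so $t_0\mid r_{m-4}$ as well. Taking, say, $n\le m$, each of the two summands of \eqref{com123} then carries two factors of $t_0$: directly in $r_{n-4}r_{m-4}[v_n,v_m]$, and in $r_{n-4}v_n(r_{m-4})v_m$ because $v_n(t_0)=0$ leaves the $t_0$ of $r_{m-4}$ intact so that it meets the $t_0$ of $r_{n-4}$. Since $t_0^2=0$, both summands vanish and $[u,w]=0$, so $[A,A]=0$.

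I expect the only genuine care to lie in the bookkeeping: correctly splitting into the orderings $n\le m$ and $n>m$, and checking in each summand that the distinguished $t_0$ is annihilated only when it meets a second $t_0$. The conceptual crux is the dichotomy isolated above — $v_m(t_0)=0$ makes the $t_0$ of an $A$-monomial persist through the derivation in \eqref{com123}, while $t_0^2=0$ kills the bracket of two $A$-monomials — after which the verification is routine and no real obstacle remains. (Alternatively, since $L=\langle v_1,v_2\rangle$ and $A$ is a subalgebra once shown abelian, one could check only $[A,v_1]\subseteq A$ and $[A,v_2]\subseteq A$ and invoke that the idealizer of a subalgebra is a subalgebra; but the direct computation already handles an arbitrary $w$ at no extra cost.)
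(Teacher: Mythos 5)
Your proof is correct and follows exactly the route the paper intends: the paper's own proof of this lemma is the single line ``It follows directly from (\ref{com123})'', and your argument is precisely the careful expansion of that claim, using $v_m(t_0)=0$ to show the factor $t_0$ persists under brackets with arbitrary elements and $t_0^2=0$ to kill brackets of two $A$-monomials. No issues.
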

\begin{proof}   It follows directly from (\ref{com123}).
\end{proof}

Consider the mapping $\tau(t_i):=t_{i+1}$, $i\ge 0$, which naturally extends to a monomorphism $\tau:R\to R$.
Set also $\tau(\partial_i):=\partial_{i+1}$, $i\ge 1$.
We get an extension to a homomorphism on associative (or Lie) algebras generated by $\{t_i,\partial_i|i\ge 0\}$.
This extensions also applies to our infinite sums.
We refer to $\tau$ as the {\it shift mapping}.

\begin{lemma}  \label{iso}
The shift mapping $\tau:L\to L$ is a monomorphsim.
Denote $L_1:=\tau(L)\cong L$.
Also, the subalgebra $L_1$ is generated by $\{v_2, v_3\}$.
\end{lemma}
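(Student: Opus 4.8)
The plan is to reduce everything to the single computation that $\tau$ shifts pivot elements by one index, namely $\tau(v_i) = v_{i+1}$ for all $i \geq 1$. First I would verify this directly from the infinite sum presentation \eqref{eqeq}: applying $\tau$ termwise to $v_i = \partial_i + t_{i-1}\partial_{i+1} + t_{i-1}t_i\partial_{i+2} + \cdots$ and using $\tau(t_j) = t_{j+1}$, $\tau(\partial_j) = \partial_{j+1}$ produces exactly the series defining $v_{i+1}$. Since the excerpt specifies that $\tau$ extends to the relevant infinite sums, this identification is legitimate.

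With $\tau(v_i) = v_{i+1}$ in hand, the description of $L_1$ is immediate. Because $\tau$ is a homomorphism of Lie algebras, it commutes with brackets, so $\tau(\langle v_1, v_2\rangle) = \langle \tau(v_1), \tau(v_2)\rangle = \langle v_2, v_3\rangle$. Noting that every $v_i$ lies in $L$ (one has $v_3 = [v_1,v_2]$ and $v_{i+2} = [v_i, v_{i+1}]$ by \eqref{prod_pivot}), this shows simultaneously that $\tau(L) \subseteq L$, so that $\tau$ restricts to a map $L \to L$, and that its image $L_1 = \tau(L)$ equals $\langle v_2, v_3\rangle$.

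It remains to prove injectivity of $\tau|_L$, and here I would use the explicit basis \eqref{basisL}. Writing a basis monomial as $r_{n-4} v_n = t_0^{z_0}\cdots t_{n-4}^{z_{n-4}} v_n$, I compute
$$\tau(r_{n-4} v_n) = \tau(t_0^{z_0}\cdots t_{n-4}^{z_{n-4}})\, v_{n+1} = t_1^{z_0}\cdots t_{n-3}^{z_{n-4}}\, v_{n+1},$$
which is again a basis monomial of $L$, of the form $r_{(n+1)-4} v_{n+1}$ with the exponent of $t_0$ equal to $0$ (for $n \leq 3$ the tail is trivial and $\tau(v_n)=v_{n+1}$). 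Distinct basis monomials have distinct images: the pivot index $n$ is recovered from the pivot $v_{n+1}$ occurring in the image, and the exponents $z_0,\dots,z_{n-4}$ are recovered as the exponents of $t_1,\dots,t_{n-3}$. Hence $\tau$ carries the basis of $L$ injectively into itself, so $\tau|_L$ sends a basis to a linearly independent set and is therefore injective. This yields the monomorphism $L \to L$ and the isomorphism $L_1 \cong L$.

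I do not anticipate a serious obstacle, since the whole argument rests on the one-line identity $\tau(v_i)=v_{i+1}$. The only point requiring care is the injectivity step, where one must confirm that shifting a tail $r_{n-4}$ yields an admissible tail for the pivot $v_{n+1}$, so that the images remain genuine basis elements rather than collapsing two different monomials; the computation above makes this explicit and, as a byproduct, shows that $\tau$ is \emph{not} surjective, since exactly the monomials with $t_0$-exponent $1$ (those spanning $A$ in Lemma \ref{Adef}) fail to lie in the image.
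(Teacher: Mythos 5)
Your proposal is correct and follows essentially the same route as the paper: observe $\tau(v_i)=v_{i+1}$, note that $\tau$ sends the basis monomials \eqref{basisL} to distinct basis monomials (shifting all indices by one), and conclude both injectivity and the identification $L_1=\tau(L)=\langle v_2,v_3\rangle$. You merely spell out the details that the paper leaves implicit, including the correct observation that the image consists exactly of the basis monomials with $t_0$-exponent zero.
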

\begin{proof}
Observe that $\tau(v_i)=v_{i+1}$, $i\ge 0$.
The action of $\tau$ on basis monomials~\eqref{basisL} yields the shift of all indices by one.
So, the subalgebra $L_1=\tau(L)$ has a basis
\begin{equation} \label{basisL0}
X_0 := \{ v_2, v_3, v_4 ,   t_1^{z_1} \ldots t_{i+1}^{z_{i+1}}  v_{i+5}  \mid   i \geq  0, z_1, \ldots, z_i \in \{ 0 , 1 \}  \}.\qedhere
\end{equation}
\end{proof}

Let $Q_1:=kv_1$ be the one dimensional Lie subalgebra.
For a short exact sequence of Lie algebras $0 \to M' \to M \to M'' \to 0$ that splits
we use notation of a {\it semidirect product} $M = M' \leftthreetimes M''$.

\begin{lemma} \label{decomposition}
We get a decomposition
\begin{equation}\label{decomp0}
L = (A \leftthreetimes L_1) \leftthreetimes Q_1.
\end{equation}
\end{lemma}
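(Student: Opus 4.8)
The plan is to first identify $L = A \oplus L_1 \oplus Q_1$ as a direct sum of vector spaces and then check the bracket relations that turn this into the asserted iterated semidirect product. For the vector space decomposition I would compare the three bases. The basis~\eqref{basisL} of $L$ is $\{v_1,v_2,v_3\}$ together with the monomials $r_{n-4}v_n$ for $n\ge 4$; the latter split according to whether the tail $r_{n-4}$ contains the factor $t_0$ or not. Those with $z_0=1$ are exactly the basis~\eqref{basisA} of $A$, while those with $z_0=0$ are $v_4$ and, for $n\ge 5$, the monomials $t_1^{z_1}\cdots t_{n-4}^{z_{n-4}}v_n$, which together with $v_2,v_3$ form the basis~\eqref{basisL0} of $L_1$. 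Since $Q_1=kv_1$ accounts for the remaining generator $v_1$, each basis monomial of $L$ lies in exactly one of $A$, $L_1$, $Q_1$, and hence $L=A\oplus L_1\oplus Q_1$.

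Next I would record the Lie structure already available: $A$ is an abelian ideal of $L$ by Lemma~\ref{Adef}, $L_1$ is a subalgebra by Lemma~\ref{iso}, and $Q_1$ is one-dimensional, hence a subalgebra. To justify the inner product $A\leftthreetimes L_1$ it suffices to see that $A\oplus L_1$ is a subalgebra in which $A$ is an ideal; both are immediate from $A$ being an ideal of all of $L$, since $[A,L_1]\subseteq[A,L]\subseteq A$, $[L_1,L_1]\subseteq L_1$, and $[A\oplus L_1,A]\subseteq[L,A]\subseteq A$.

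The one substantial point, and the step I expect to be the main obstacle, is that $A\oplus L_1$ is an \emph{ideal} of $L$, which yields the outer semidirect product with $Q_1=kv_1$. As $L=(A\oplus L_1)\oplus Q_1$ and $A\oplus L_1$ is already a subalgebra, I only need $[v_1,A\oplus L_1]\subseteq A\oplus L_1$; and since $[v_1,A]\subseteq A$, everything comes down to the single inclusion $[v_1,L_1]\subseteq A\oplus L_1$. I would prove this by exploiting that $ad(v_1)$ is a derivation. On the generators of $L_1$, the relations~\eqref{prod_pivot} give $[v_1,v_2]=v_3\in L_1$ and $[v_1,v_3]=t_0v_4\in A$, both lying in $A\oplus L_1$. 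For an iterated bracket $x=[y,z]$ with $y,z\in L_1$ of shorter length, the Jacobi identity yields $[v_1,[y,z]]=[[v_1,y],z]+[y,[v_1,z]]$; by the inductive hypothesis $[v_1,y],[v_1,z]\in A\oplus L_1$, and since $y,z\in L_1\subseteq A\oplus L_1$ with $A\oplus L_1$ a subalgebra, both summands lie in $A\oplus L_1$. Induction on bracket length then gives $[v_1,L_1]\subseteq A\oplus L_1$, as required.

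Finally, I would note that the same inclusion can be checked directly on basis monomials through~\eqref{com123}: writing $[v_1,r_{m-4}v_m]=v_1(r_{m-4})v_m+r_{m-4}[v_1,v_m]$, the second summand acquires the factor $t_0$ from $[v_1,v_m]=t_0t_1\cdots t_{m-3}v_{m+1}$ and so lands in $A$, while $v_1(r_{m-4})v_m$ separates, via the Leibniz rule and $v_1(t_1)=1$, $v_1(t_j)=t_0\cdots t_{j-2}$ for $j\ge 2$, into a $t_0$-free term in $L_1$ and $t_0$-terms in $A$. With $A\oplus L_1$ established as an ideal of $L$ complemented by $Q_1$, the decomposition $L=(A\leftthreetimes L_1)\leftthreetimes Q_1$ follows.
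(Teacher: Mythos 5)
Your proposal is correct and follows essentially the same route as the paper: identify the vector-space decomposition by matching the bases \eqref{basisL}, \eqref{basisA}, \eqref{basisL0}, use that $A$ is an ideal (Lemma~\ref{Adef}) to get the inner semidirect product, and reduce the outer one to the inclusion $[v_1,L_1]\subseteq A\leftthreetimes L_1$. The paper merely asserts this last inclusion, whereas you supply the details (induction on bracket length from $[v_1,v_2]=v_3$ and $[v_1,v_3]=t_0v_4$, plus the direct check via \eqref{com123}); both verifications are sound.
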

\begin{proof}
Note  that  $X_A$, $X_0$, and $\{ v_1 \}$ are $k$-bases of $A$, $L_1$, and $Q_1$
respectively and the disjoint union $X_A \cup X_0 \cup \{ v_1 \}$ is a basis of $L$
(see \eqref{basisA}, \eqref{basisL0}, and \eqref{basisL}).
Then the Lie subalgebra generated by $A$ and $L_1$ is $A \leftthreetimes L_1$ and since
$[L_1, v_1] \subset A \leftthreetimes L_1$  we have that $A \leftthreetimes L_1$ is an ideal of $L$. 
\end{proof}

In~\cite{Bartholdi} Bartholdi called a Lie algebra $B$ {\it self-similar} if there exists a  monomorphism
\begin{equation} \label{psi}
\psi : B \to (X \otimes B) \leftthreetimes Der(X),
\end{equation}
where $X$ is a commutative $k$-algebra.
An algebra is called {\it just infinite} if any non-zero ideal is of finite codimension.
This notion serves as a weak version of simplicity for infinite dimensional algebras.

Now we define and describe properties of the Lie algebra considered in~\cite{Bartholdi,Petr-Shest2}.
\begin{theorem} \label{just_infinite}
Let $L$ be the Fibonacci Lie algebra.
Set $\widetilde L:=L/A$, where $A$ the ideal defined above. Then
\begin{enumerate}
\item  $\tilde L$ is embedded in $Der R $ and has a basis $\partial_1\cup X_0$ (\eqref{basisL0})~\cite[Lemma 7.2]{Petr-Shest2}.
\item $\tilde L$ is 2-generated by $\{\partial_1, v_2=\partial_2+t_1(\partial_3+t_2(\partial_4+\cdots )) \}$.
\item  $\widetilde L=k\partial_1\rightthreetimes L_1$, where $L_1=\langle v_2,v_3\rangle =\tau(L)\cong L$.
\item $\widetilde L$ is just infinite~\cite[Theorem 7.8]{Petr-Shest2}.
\item $\widetilde L$ is self-similar~\cite{Bartholdi}.
\end{enumerate}
\end{theorem}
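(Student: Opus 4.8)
The plan is to treat the five assertions in two groups: items (1), (4) and (5) are recorded in the cited literature and will be invoked directly, while (2) and (3) will be deduced quickly from the structural Lemmas~\ref{Adef}, \ref{iso} and~\ref{decomposition}. The conceptual heart is to realise $\widetilde L = L/A$ concretely inside a derivation algebra, which simultaneously pins down the basis in (1).

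First I would exhibit the embedding of item (1) by reduction modulo $t_0$. Put $\bar R := R/(t_0 R) = k[t_i \mid i \ge 1]/(t_i^2 \mid i \ge 1)$. Since $\partial_1(t_0) = 0$ and $v_2(t_0) = 0$, every generator of $L$ annihilates $t_0$, and as $[v,w](t_0) = v(w(t_0)) - w(v(t_0))$ this persists under brackets; hence $v(t_0) = 0$ for all $v \in L$, so $L$ preserves the ideal $(t_0)$ and restricts to a Lie homomorphism $\mathrm{res}\colon L \to Der(\bar R)$. On generators one computes $\mathrm{res}(v_1) = \partial_1$ (the summand $t_0 v_2$ dies in $\bar R$) and $\mathrm{res}(v_n) = v_n$ for $n \ge 2$, since these involve neither $t_0$ nor $\partial_0$. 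The point to pin down is that $\ker(\mathrm{res}) = A$: evaluating on the basis~\eqref{basisL}, a monomial $r_{n-4} v_n$ carrying the factor $t_0$ lies in $X_A$ and acts through $t_0 R$, hence dies in $\bar R$, whereas a $t_0$-free basis monomial sends $t_n$ to the $t_0$-free element $r_{n-4}\cdot v_n(t_n) = r_{n-4} \ne 0$ and therefore survives; a leading-term argument on the partials $r_{n-4}\partial_n$ shows no nonzero combination of the survivors cancels, so $\mathrm{res}$ is injective on the complementary span of $\{v_1\} \cup X_0$. This yields the embedding $\widetilde L = L/A \hookrightarrow Der(\bar R)$, whose image consists of derivations not involving $t_0$ or $\partial_0$ and so is viewed inside $Der(R)$, with basis $\{\partial_1\} \cup X_0$, proving (1) and reconciling with~\cite[Lemma 7.2]{Petr-Shest2}.

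Items (2) and (3) then follow. Because $L = \langle v_1, v_2\rangle$, its quotient is generated by the images $\mathrm{res}(v_1) = \partial_1$ and $\mathrm{res}(v_2) = v_2$, giving (2). For (3), Lemma~\ref{decomposition} shows $A \leftthreetimes L_1$ is an ideal of $L$, so $[v_1, L_1] \subseteq A \oplus L_1$; passing to $\widetilde L$, the image $\overline{L_1}$ becomes an ideal, complemented by $k\partial_1 = k\,\overline{v_1}$, which is a genuine complement since $v_1 \notin A \oplus L_1$. Hence $\widetilde L = k\partial_1 \rightthreetimes L_1$, and by Lemma~\ref{iso} we have $L_1 = \langle v_2, v_3\rangle = \tau(L) \cong L$, as claimed.

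Finally, items (4) and (5) are imported: just-infiniteness is \cite[Theorem 7.8]{Petr-Shest2} and self-similarity is \cite{Bartholdi}. Among the five assertions the substantive external input is the just-infinite property (4), so a fully self-contained argument would concentrate its effort there; with that result cited, the only delicate in-house verification is the kernel computation $\ker(\mathrm{res}) = A$ underlying (1), which is precisely the statement that no nonzero combination of $t_0$-free basis monomials acts trivially on $\bar R$.
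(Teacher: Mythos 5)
Your proposal is correct and follows essentially the same route as the paper: reduction modulo the ideal $t_0R$, identification of the kernel of the resulting map $L\to Der(\widetilde R)$ with $A$ via the basis \eqref{basisL}, the semidirect decomposition for items (2)--(3), and citation of \cite{Petr-Shest2} for (4). The only divergence is item (5), which you import wholesale from \cite{Bartholdi} whereas the paper sketches the explicit self-similarity homomorphism $\psi$ on the generators; since the theorem statement itself carries that citation, this is a negligible difference.
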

\begin{proof}
1) Consider $J:=t_0 R\triangleleft R$, then $R/J\cong  \widetilde R=:k[t_i| i\ge 1]/(t_i^2|i\ge 1)\subset R$.
The ideal $J$ is invariant under $v_1,v_2$ because they cannot eliminate $t_0$.
Define the composed homomorphism $\pi:L\to  Der \widetilde R\hookrightarrow Der R$,
where the latter embedding assumes the trivial action on $t_0$ and keeps the same action on $t_i$, $i\ge 1$.
We get $\pi (v_1)=\pi(\partial_1+t_0 v_2)=\partial_1$, $\pi(v_2)=v_2$, and $\pi(X_A)=0$.
Also, the elements of $X_0$ remain the same and linearly independent.
Hence $Ker(\pi)=A$, and $Im(\pi)\cong L/A $ has a basis $\partial_1\cup X_0$.

2,3) $[\partial_1,v_2]=[\partial_1,\partial_2+t_1v_3]=v_3$, while $v_2,v_3$ generate $L_1$ by Lemma~\ref{iso}.
We get $\widetilde L=k\partial_1\rightthreetimes L_1\subset Der R$, where $L_1=\langle v_2,v_3\rangle =\tau(L)\cong L$.

5) Put $X: = k[t]/ (t^2)$, with respective $\partial$, and define a homomorphism
\begin{align*}
&\psi : \widetilde L   \to (X \otimes \tau(\widetilde L)) \leftthreetimes Der(X)\qquad \text{defined as}\\
&\psi(\partial_1) :=\partial,\quad
\psi(v_2)=\psi(\partial_2+t_1v_3) :=1\otimes \partial_2 +t\otimes v_3.
\end{align*}
It remains to identify $\langle \partial_2,v_3\rangle =\tau (\langle \partial_1,v_2 \rangle)=\tau(\widetilde L)$ with $\widetilde L$.
\end{proof}
\begin{theorem}[\cite{Petr-Shest2}]\label{Ngrading}
The algebras $L$, $\widetilde L$ are $\mathbb{N}$-graded by degree in the generators.
\end{theorem}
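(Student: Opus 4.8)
The plan is to realize both $L$ and $\widetilde L$ as graded subalgebras of $Der(R)$ for a suitable $\mathbb{Z}$-grading of $R$ in which the two chosen generators sit in degree $1$. Let $f_0=0$, $f_1=f_2=1$, $f_{i+1}=f_i+f_{i-1}$ denote the Fibonacci numbers and make $R=k[t_i\mid i\ge 0]/(t_i^2\mid i\ge 0)$ into a $\mathbb{Z}$-graded algebra by declaring $\deg t_i=-f_i$ for $i\ge 0$. A monomial $\prod_i t_i^{z_i}$ then has weight $-\sum_i z_i f_i$, and $R=\bigoplus_{n}R_n$. This induces the usual $\mathbb{Z}$-grading on $Der(R)$, in which a derivation $d$ is homogeneous of degree $m$ when $d(R_n)\subseteq R_{n+m}$ for all $n$; with this convention each $\partial_i=\partial/\partial t_i$ is homogeneous of degree $+f_i$, since differentiating removes one factor $t_i$ of weight $-f_i$. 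Because the bracket of homogeneous derivations of degrees $m_1,m_2$ is homogeneous of degree $m_1+m_2$, the homogeneous derivations span a $\mathbb{Z}$-graded Lie subalgebra of $Der(R)$, and it suffices to place $L$ and $\widetilde L$ inside it.

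The key step is to check that every pivot element $v_i$ is homogeneous of degree $f_i$. Reading off \eqref{eqeq}, the summand of $v_i$ containing $\partial_j$ (for $j\ge i$) is $t_{i-1}t_i\cdots t_{j-2}\,\partial_j$, where the product of $t$'s is empty when $j=i$. Its degree equals
\[
-\sum_{k=i-1}^{j-2} f_k + f_j .
\]
The standard partial–sum identity $\sum_{k=a}^{b} f_k = f_{b+2}-f_{a+1}$ gives $\sum_{k=i-1}^{j-2} f_k = f_j-f_i$, so every summand has degree exactly $f_i$, independently of $j$. Hence $v_i$ is homogeneous of degree $f_i$; in particular $v_1,v_2$, and likewise $\partial_1$ of degree $f_1=1$, are homogeneous of degree $1$.

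It follows at once that $L=\langle v_1,v_2\rangle$ is a graded subalgebra of $Der(R)$ generated by two elements of degree $1$. Consequently $L=\bigoplus_{d\ge 1}L_d$, where $L_d$ is the linear span of all Lie monomials of length $d$ in $v_1,v_2$; the ambient $\mathbb{Z}$-grading forces these spans to be independent, so length in the generators is a well-defined grading and $L$ is $\mathbb{N}$-graded. One may also confirm positivity directly from the basis \eqref{basisL}, since $\deg(r_{n-4}v_n)=f_n-\sum_{k}z_k f_k>0$. For $\widetilde L$ one argues identically: by Theorem \ref{just_infinite} it is generated inside $Der(R)$ by $\partial_1$ and $v_2$, both homogeneous of degree $1$, so $\widetilde L$ is again a graded subalgebra generated in degree $1$ and hence $\mathbb{N}$-graded. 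Equivalently, the ideal $A$ of Lemma \ref{Adef} is spanned by homogeneous basis monomials, so it is a graded ideal and $\widetilde L=L/A$ inherits the grading.

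The only real content is the homogeneity of the $v_i$, and the whole argument hinges on choosing the Fibonacci weights $\deg t_i=-f_i$; once this is in place the telescoping identity for consecutive Fibonacci numbers makes the infinite tails in \eqref{eqeq} collapse to a single degree, and everything else is formal. The mild points to keep honest are that the $\mathbb{Z}$-grading on $R$ is a genuine direct-sum decomposition (each weight space is well defined, the negative weights being controlled by a Zeckendorf-type finiteness) and that restricting to the homogeneous part of $Der(R)$ is harmless, precisely because the generators $v_i$ are already homogeneous.
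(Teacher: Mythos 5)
Your proof is correct, but it is genuinely more self-contained than what the paper does: the paper's proof of Theorem \ref{Ngrading} is essentially a citation, asserting that $L$ is graded by degree in the generators with the monomial basis \eqref{basisL} homogeneous (referring to Petrogradsky--Shestakov) and then observing that $\widetilde L=L/A$ inherits the grading. You instead reconstruct the underlying mechanism: the Fibonacci weighting $\deg t_i=-f_i$, $\deg\partial_i=+f_i$ on $Der(R)$, under which the telescoping identity $\sum_{k=i-1}^{j-2}f_k=f_j-f_i$ makes every summand $t_{i-1}\cdots t_{j-2}\partial_j$ of the pivot $v_i$ in \eqref{eqeq} homogeneous of degree $f_i$; since $v_1,v_2$ (and $\partial_1$, via Theorem \ref{just_infinite}) then all sit in degree $f_1=f_2=1$ of an ambient graded Lie algebra, the spans of Lie monomials of fixed length are forced to be independent, which is exactly the assertion that $L$ and $\widetilde L$ are $\mathbb{N}$-graded by degree in the generators. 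This is consistent with the bracket formulas (e.g.\ $[v_i,v_j]=t_{i-1}\cdots t_{j-3}v_{j+1}$ has weight $f_i+f_j$) and also recovers, as a byproduct, the explicit degree $f_n-\sum_k z_kf_k>0$ of each basis monomial $r_{n-4}v_n$, i.e.\ the homogeneity of \eqref{basisL} that the paper takes from the references; your closing remark that $A$ is a graded ideal so $\widetilde L=L/A$ inherits the grading matches the paper's endgame via Lemma \ref{Adef}. What the paper's route buys is brevity by outsourcing the weight-function computation; what yours buys is a complete verification inside this paper, with the only care points (well-definedness of the $\mathbb{Z}$-grading on $R$ and the restriction to homogeneous derivations) correctly identified and handled.
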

\begin{proof} The algebra $L$ is graded by degree in the generators and its
monomial basis~\eqref{basisL} is homogeneous~\cite{Petr-Shest,Petr-Shest2}.
Hence, $\widetilde L=L/A$ inherits the grading.
\end{proof}

\section{Homological argument}

Let $M = B \leftthreetimes C$ be a semidirect product of Lie algebras.
Consider the complex
$$
{\mathcal M} : \ldots \to \wedge^n M \mapnew{d_n} \wedge^{n-1} M \to \ldots \to M \to k \to 0
$$
with differential
$$
d_n( x_1 \wedge \ldots \wedge x_n ) =
 \sum_{1 \leq i < j \leq n}(-1)^{i+j} [x_i, x_j] \wedge x_1 \wedge \ldots \widehat{x_i} \ldots \widehat{x_j} \ldots \wedge x_n $$
 where $x_1, \ldots, x_n \in M$.
Thus
$ d_2(x_1 \wedge x_2) = - [x_1, x_2] $ and
\begin{equation}\label{diff3}
d_3(x_1 \wedge x_2 \wedge x_3) = - [x_1, x_2] \wedge x_3 + [x_1, x_3] \wedge x_2 - [x_2, x_3] \wedge x_1.
\end{equation}
By subsection \ref{Lie-homology} $H_i(M,k) \simeq H_i(\mathcal{M})$,  so
$H_2(M,k) \simeq H_2(\mathcal{M}).$
On the other hand
\begin{align*}
\wedge^2 M &= (\wedge^2 B) \oplus (B \otimes C) \oplus (\wedge^2 C );\\
\wedge^3 M &= (\wedge^3 B) \oplus ((\wedge^2 B) \otimes C) \oplus (B \otimes ( \wedge^ 2 C)) \oplus (\wedge^3 C).
\end{align*}

Consider the subcomplex $\mathcal{B}$ of $\mathcal{M}$
$$
\mathcal{B} :
\ldots \to \wedge^i B \to \wedge^{i-1} B \to \ldots \to B \to k \to 0
$$
 and set $\mathcal D$ to be the quotient complex $\mathcal{M} / \mathcal{B}$. Then we have the short exact sequence of
complexes
\begin{equation} \label{complexes-ses} 0 \to \mathcal{B} \to \mathcal{M} \to \mathcal{D} \to 0\end{equation}
and  by \cite[Thm. 6.10]{Rotman} it gives rise to a long exact sequence in homology
\begin{equation} \label{les1}
 \ldots \to H_3(\mathcal{D}) \mapnew{\partial_3}  H_2(\mathcal{B}) \to  H_2(\mathcal{M}) \to  H_2(\mathcal{D}) \mapnew{\partial_2}  H_1(\mathcal{B}) \to  H_1(\mathcal{M}) \to  H_1(\mathcal{D}) \to \ldots
 \end{equation}
 where $\partial_3$ and $\partial_2$ are the
connecting homomorphisms as defined in \cite[p. 333]{Rotman}.

Consider the subcomplex $\mathcal{E}$ of $\mathcal{M}$
$$
\mathcal{E} :
\ldots \to  E_i = \oplus _{1 \leq j \leq i}(\wedge^j B) \otimes (\wedge^{i-j} C) \to E_{i-1}  \to \ldots \to E_1 = B \to E_0= k \to 0
$$
and the subcomplex
 $\mathcal{C}$ of $\mathcal{M}$
 $$
\mathcal{C} :
\ldots \to \wedge^i C \to \wedge^{i-1} C \to \ldots \to C \to k \to 0
$$
Substituting in $\mathcal{C}$ the module $k$ with $0$ we get that complex $\widetilde{\mathcal{C}}$
and a short exact sequence of complexes
$0 \to \mathcal{E} \to \mathcal{M} \ \mapnew{ \pi}  \  \widetilde{\mathcal{C}} \to 0.$ 
Note that the above short exact sequence splits
\begin{equation} \label{soma} \mathcal{M} \simeq \mathcal{E}\oplus \widetilde{\mathcal{C}}. \end{equation}

\begin{lemma} \label{newMay}
a) The map $\pi$ induces an epimorphism
$
H_2(\mathcal{M} )\   \mapnew{ \pi_*} \ H_2( \widetilde{\mathcal{C}})
$
that can be identified with an epimorphism
$
H_2(M, k) \to H_2(C, k)
$
 induced by the projection $M \to C$.

b) The inclusion map $C \to M$ induces a monomorphsim $H_2(C,k)\to H_2(M,k)$.
\end{lemma}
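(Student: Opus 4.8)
The plan is to exploit the splitting $\mathcal{M} \simeq \mathcal{E} \oplus \widetilde{\mathcal{C}}$ of \eqref{soma} together with the fact that $\widetilde{\mathcal{C}}$ computes $H_*(C,k)$ in positive degrees. First I would record that $\widetilde{\mathcal{C}}$ agrees with the Chevalley--Eilenberg complex $\mathcal{C}$ of $C$ in every degree $\geq 1$ and differs only in degree $0$, where $k$ is replaced by $0$. Since the differential $C \to k$ of $\mathcal{C}$ is zero, this degree-$0$ modification changes no homology in positive degrees, so $H_i(\widetilde{\mathcal{C}}) \simeq H_i(C,k)$ for all $i \geq 1$; in particular $H_2(\widetilde{\mathcal{C}}) \simeq H_2(C,k)$. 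Applying homology to the split short exact sequence of complexes then yields $H_2(\mathcal{M}) \simeq H_2(\mathcal{E}) \oplus H_2(\widetilde{\mathcal{C}})$, so that $\pi_*$ is the projection onto the second summand and is therefore a (split) epimorphism.

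It remains for part (a) to identify $\pi_*$ with the map induced by the Lie projection $p : M \to C$. I would observe that $p$, being a Lie homomorphism with kernel $B$, induces a chain map $\mathcal{M} \to \mathcal{C}$ which in degree $n$ is the map $\wedge^n M \to \wedge^n C$ that kills every wedge monomial containing a factor from $B$ and restricts to the identity on $\wedge^n C$. Under the decomposition $\wedge^n M = \bigoplus_{j} (\wedge^j B) \otimes (\wedge^{n-j} C)$ this is precisely the projection onto the $j=0$ summand, which in degrees $\geq 1$ coincides with $\pi$. Because $H_2$ involves only the terms in degrees $1,2,3$ --- where $\mathcal{C}$ and $\widetilde{\mathcal{C}}$ agree --- the two induced maps on $H_2$ are the same, so $\pi_* = H_2(p)$, which proves (a).

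For part (b), I would use that the complement $C$ sits in $M$ as a subalgebra, so the inclusion $\iota : C \to M$ is a Lie homomorphism satisfying $p \circ \iota = \mathrm{id}_C$. By functoriality of homology, $H_2(p) \circ H_2(\iota) = \mathrm{id}_{H_2(C,k)}$, whence $H_2(\iota)$ is a split monomorphism; equivalently, at the chain level $\iota$ realises $\mathcal{C}$ as the direct summand $\widetilde{\mathcal{C}}$ of $\mathcal{M}$, so on $H_2$ it is the inclusion of a direct summand.

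The step requiring the most care is justifying the chain-level splitting $\mathcal{M} \simeq \mathcal{E} \oplus \widetilde{\mathcal{C}}$ itself, namely that $\mathcal{E}$ and $\widetilde{\mathcal{C}}$ are genuine subcomplexes. This is exactly where one uses that $B$ is an ideal: because $[B,B]\subseteq B$, $[B,C]\subseteq B$ and $[C,C]\subseteq C$, the differential \eqref{diff3} and its higher analogues can never reduce the number of $B$-factors of a monomial below one, so $d(\mathcal{E}) \subseteq \mathcal{E}$, while $d$ visibly preserves $\wedge^* C = \widetilde{\mathcal{C}}$. Once this is in place, the matching of $\pi$ with the map induced by $p$ across the degree-$0$ discrepancy is routine bookkeeping.
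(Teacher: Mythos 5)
Your proposal is correct and follows essentially the same route as the paper: the paper's (two-sentence) proof likewise reads off $H_2(\mathcal{M})\simeq H_2(\mathcal{E})\oplus H_2(\widetilde{\mathcal{C}})$ from the chain-level splitting \eqref{soma} and identifies $H_2(\mathcal{M})\simeq H_2(M,k)$ and $H_2(\widetilde{\mathcal{C}})\simeq H_2(C,k)$. You merely supply the details the authors leave implicit (why $\mathcal{E}$ and $\wedge^{*}C$ are subcomplexes, why $\pi_*$ agrees with the map induced by $M\to C$, and the retraction $p\circ\iota=\mathrm{id}_C$ for part (b)), all of which are accurate.
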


\begin{proof} By (\ref{soma})
$H_2(\mathcal{M})=H_2(\mathcal{E})\oplus H_2(\widetilde{\mathcal{C}})$. Note that $H_2(\mathcal{M}) \simeq H_2(M,k)$ and $H_2(\widetilde{\mathcal{C}}) \simeq H_2(C,k)$.
\end{proof}

\begin{lemma}\label{split} Let $M = B \leftthreetimes C$ be a split extension of Lie algebras, where
 $C$ is one dimensional.  Then

 a) there is a short exact sequence
	$$0 \to H_0(C, H_2(B,k)) \to H_2(M,k) \to H_1(C, H_1(B,k)) \to 0$$
	
b)	
Identifying $H_2(M,k)$ with a subquotient of $M \wedge M$, if $b_ 0 \in B \setminus [B,B]$ and $c_0 \in C \setminus \{ 0 \}$ such that $[b_0, c_0] = 0$
then  $b_0 \otimes c_0 \in B \otimes C \subset M \wedge M $ represents a cycle whose homology class
in $H_2(M,k)$ maps to a non-zero element in $H_1(C, H_1(B,k))$.
\end{lemma}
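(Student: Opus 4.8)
The plan is to derive part (a) directly from the long exact sequence \eqref{les1} attached to the short exact sequence of complexes \eqref{complexes-ses}, exploiting crucially that $\dim_k C = 1$. First I would record the effect of one-dimensionality on the quotient complex $\mathcal{D} = \mathcal{M}/\mathcal{B}$: since $\wedge^j C = 0$ for $j \geq 2$, we have $\mathcal{D}_n = (\wedge^{n-1} B) \otimes C$. I would then decompose the differential applied to a generator $b_1 \wedge \cdots \wedge b_{n-1} \wedge c$ into the brackets $[b_i,b_j] \in B$, which preserve the single $C$-factor, and the brackets $[b_i,c] \in B$, which consume the $C$-factor and hence land in the subcomplex $\mathcal{B}$. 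Passing to the quotient kills the second type, so the induced differential on $\mathcal{D}$ is, up to sign, the Chevalley--Eilenberg differential of $B$ tensored with $\mathrm{id}_C$. This gives $H_n(\mathcal{D}) \cong H_{n-1}(B,k) \otimes C$, and in particular $H_2(\mathcal{D}) \cong H_1(B,k) \otimes C$ and $H_3(\mathcal{D}) \cong H_2(B,k) \otimes C$.

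Next I would identify the connecting homomorphisms of \eqref{les1} with the $C$-action on the homology of $B$. For a cycle $b \otimes c \in \mathcal{D}_2$, the lift $b \wedge c \in \wedge^2 M$ satisfies $d_2(b \wedge c) = -[b,c] \in B = \mathcal{B}_1$, so $\partial_2$ sends $\bar b \otimes c$ to $-\overline{[b,c]}$, which is precisely (up to sign) the adjoint action of $c$ on $H_1(B,k) = B/[B,B]$; the analogous computation on $\wedge^3 M$ using \eqref{diff3} identifies $\partial_3$ with the $C$-action on $H_2(B,k)$. Fixing a basis $c_0$ of $C$ and writing $\mu$ for the action of $c_0$, Lemma \ref{duality} gives $\ker \partial_2 \cong H_1(B,k)^C \cong H_1(C, H_1(B,k))$, while $\mathrm{coker}\,\partial_3 \cong H_2(B,k)_C = H_0(C, H_2(B,k))$. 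Exactness of \eqref{les1} then produces the short exact sequence $0 \to \mathrm{coker}\,\partial_3 \to H_2(M,k) \to \ker \partial_2 \to 0$, which is exactly the sequence claimed in (a), with the right-hand map induced by the projection $\mathcal{M} \to \mathcal{D}$.

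For part (b) I would argue directly on cycles. Since $[b_0,c_0]=0$, the element $b_0 \wedge c_0 \in B \otimes C \subset \wedge^2 M$ satisfies $d_2(b_0 \wedge c_0) = -[b_0,c_0] = 0$, so it is a $2$-cycle and represents a class in $H_2(M,k)$. Under the map $H_2(M,k) \to H_2(\mathcal{D}) \cong H_1(B,k) \otimes C$ of part (a) it projects to $\bar{b}_0 \otimes c_0$, and this is non-zero because $b_0 \notin [B,B]$ forces $\bar{b}_0 \neq 0$ in $H_1(B,k)$ while $c_0 \neq 0$. Being the image of an honest cycle, it automatically lies in $\ker \partial_2 = H_1(C, H_1(B,k))$; equivalently one checks $\partial_2(\bar{b}_0 \otimes c_0) = \pm\,\overline{[b_0,c_0]} = 0$. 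Hence its homology class maps to a non-zero element of $H_1(C, H_1(B,k))$, as required.

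The delicate step is the identification of the connecting homomorphisms $\partial_2, \partial_3$ with the induced $C$-action on $H_\ast(B,k)$, together with careful sign bookkeeping in \eqref{diff3}; once this is in place, everything else is a formal consequence of the vanishing $\wedge^j C = 0$ for $j \geq 2$ and of Lemma \ref{duality}. Conceptually this is just the degeneration of the Hochschild--Serre spectral sequence $E^2_{p,q} = H_p(C, H_q(B,k)) \Rightarrow H_{p+q}(M,k)$, which has only the two columns $p = 0, 1$ since $C$ is one-dimensional, so no differentials survive and the filtration on $H_2(M,k)$ collapses to the stated two-term sequence.
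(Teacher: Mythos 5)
Your proposal is correct and follows essentially the same route as the paper: both extract the short exact sequence from the long exact homology sequence of $0 \to \mathcal{B} \to \mathcal{M} \to \mathcal{D} \to 0$, identify the connecting maps $\partial_3,\partial_2$ with the adjoint $C$-action on $H_2(B,k)$ and $H_1(B,k)$ respectively, invoke Lemma \ref{duality} to rewrite $\ker\partial_2 = H^0(C,H_1(B,k))$ as $H_1(C,H_1(B,k))$, and prove (b) by the same direct cycle computation with $b_0\otimes c_0$. Your slightly more systematic observation that $\mathcal{D}$ is the shifted Chevalley--Eilenberg complex of $B$ tensored with $C$ is only a cosmetic difference from the paper's degree-by-degree computation.
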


{\it Remark}. For the readers that are accustomed with spectral sequence part a) of the above lemma is a special case of the Lie algebra version of the
Lyndon-Hoschild-Serre spectral sequence i.e. $E_{i,j}^2 = H_i(C, H_j(B,k))$ converging to $H_{i+j}(M, k)$. Here $E_{i,j}^2 = 0$ for $i \geq 2$,
hence the spectral sequence is concentrated in two consecutive columns, hence collapses i.e. $E_{i,j}^{\infty} = E_{i,j}^2$.

\begin{proof}
a)  Note that $\wedge^2 M = (\wedge^2 B) \oplus (B \otimes C) \oplus (\wedge^2 C ) =   (\wedge^2 B) \oplus (B \otimes C) $ and
$$\wedge^3 M = (\wedge^3 B) \oplus ((\wedge^2 B) \otimes C) \oplus (B \otimes ( \wedge^ 2 C)) \oplus (\wedge^3 C) =  (\wedge^3 B) \oplus ((\wedge^2 B) \otimes C)
$$

\medskip
1.  We will study now the  connecting homomorphism  $\partial_3$ from (\ref{les1}) . Let $[\lambda]
\in   H_3(\mathcal{D})$ for some $\lambda \in (\wedge^2 B) \otimes C$.
By the definition of the connecting homomorphism,  see \cite[p. 333]{Rotman},
and the fact that it is well defined we have that $\partial_3([\lambda])$ is the homology class of $d_3(\lambda)$ and $d_3(\lambda) \in B \wedge B$.

Let $\widetilde{d}_*$ be the differential of $\mathcal{D}$.
Then $\widetilde{d}_3(\lambda) = 0$ and for $\lambda = \sum_i b_{1,i} \wedge b_{2,i} \otimes c_i$
we have
$$\widetilde{d}_3(\sum_i b_{1,i} \wedge b_{2,i} \otimes c_i) = \sum_i [b_{1,i}, b_{2,i}] \otimes c_i = 0$$
Thus we can split $\lambda$ as a sum of elements with fixed $c_i$ i.e. we can assume that $c_i = c$ for all $i$ and $\sum_i [b_{1,i}, b_{2,i}] = 0$.
Now $ d_3(\lambda)$ equals
$$ \sum_i ( - [b_{1,i}, b_{2,i}] \wedge c + [b_{1,i},c] \wedge b_{2,i} -  [b_{2,i},c] \wedge b_{1,i})=
\sum_i (  [b_{1,i},c] \wedge b_{2,i} +   b_{1,i}  \wedge [b_{2,i},c] )
$$
where for $b \in B, c \in C$ we identify $b \wedge c$ with $b \otimes c \in B \otimes C$.
Now an element $\underline{a} = \sum_i a_{1,i} \wedge a_{2,i} \in B \wedge B$ belongs to $ker(d_2 |_{B \wedge B})$ precisely when $\sum_i [a_{1,i}, a_{2,i}] = 0$. Thus $$H_2(B, k) \simeq H_2(\mathcal{B}) =  ker(d_2  |_{B \wedge B}) / Im( d_3  |_{B \wedge B \wedge B}) $$ is the set of all homology classes $\underline{a} + Im (d_3)$ and by the above description of $d_3(\lambda)$ for $b_{j,i} = a_{j,i}$ we obtain that $$Im (\partial_3) = H_2(\mathcal{B}) \circ C,$$ where we view $H_2(\mathcal{B})$ as a subquotient of $B \wedge B$ and $\circ$ denotes the diagonal $C$-action on $B \wedge B$ and the $C$-action on $B$ is the adjoint one.

Thus by the long exact sequence we obtain that $H_2(\mathcal{B})_C = H_2(\mathcal{B})/ Im (\partial_3)$ embeds in $H_2(\mathcal{M})$
i.e. there is an embedding $H_0(C, H_2(B,k)) \to H_2(M,k)$.

\medskip
2.  Recall that the complex $\mathcal{D}$  was defined in (\ref{complexes-ses}), i.e.
$$
\mathcal{D} : \ldots  \to \wedge^3 B \otimes C \mapnew{\widetilde{d}_4}  \wedge^2 B \otimes C \mapnew{\widetilde{d}_3} B \otimes C
 \mapnew{\widetilde{d}_2} C \to 0
 $$
  Note that  since $\widetilde{d}_2$ is induced by $d_2$ and $Im (d_2) \subseteq B$ we deduce that $\widetilde{d}_2 = 0$.
  Similarly since $\widetilde{d}_3$ is induced by $d_3$  we get that $\widetilde{d}_3( b_1 \wedge b_2 \otimes c) = [b_1, b_2] \otimes c \in B \otimes C.$
  Hence
  $$
  H_2(\mathcal{D}) = Ker( \widetilde{d}_2) / Im (\widetilde{d}_3) \simeq (B/ [B,B]) \otimes C  \simeq B/ [B,B].
  $$
 The long exact sequence in homology  (\ref{les1}) implies the short exact sequence
 $$
 0 \to H_2(\mathcal{B})/  Im ({\partial}_3)  \to H_2(\mathcal{M}) \to  Ker(\partial_2)  \to 0$$
 We have already identified $H_2(\mathcal{B})/  Im ({\partial}_3)  $ with $H_2(\mathcal{B})_C = H_0(C, H_2(\mathcal{B}))$.

 Now we study the connected homomorphism $\partial_2 :  H_2(\mathcal{D})  \to H_1(\mathcal{B})$ from (\ref{les1}).
 Let $\lambda = b \otimes c \in B \otimes C$ and $[\lambda]$ be the image of $\lambda$ in $B/ [B,B] \otimes C \simeq B/ [B,B]$.
 Then ${\partial}_2([\lambda])$ is the homology class of $d_2(b \otimes c) = [b,c] \in B$
 i.e. is $[b,c] + [B,B]$. Note that $H_1(\mathcal{B}) = B/ [B,B]$. Thus
 $$
  Ker (\partial_2) = H^0(C, B/[B,B]) \simeq H_1(C, B/ [B,B]) = H_1(C, H_1(B,k)).$$

b) Finally suppose that $b_0 \in B \setminus [B,B]$ and $c_0 \in C \setminus \{ 0 \}$ be such that $[b_0, c_0] = 0$.
Then $\lambda = b_0 \otimes c_0 \in B \otimes C$ and $d_2(b_0 \otimes c_0) = [b_0, c_0] = 0$, then
  $\lambda + Im (d_3) \in H_2(\mathcal{M})$ .
  The map
 $H_2(\mathcal{M}) \to H_2(\mathcal{D})$  from (\ref{les1}) sends  $\lambda + Im (d_3)$
 to the homology class $b_0 \otimes c_0 + im (\widetilde{d}_3) = b_0 \otimes c_0 + [B, B] \otimes C \in (B/ [B,B]) \otimes C$.
 The last homology class is not zero since $b_0 \notin [B,B]$ and $c_0 \not= 0$.
\end{proof}

\begin{lemma} \label{derived-fin-gen} The derived subalgebra $[L,L]$ is finitely generated.
\end{lemma}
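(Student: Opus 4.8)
The plan is to produce an explicit finite generating set and verify it by induction on the pivot index $n$. First I would record that, since $L$ is $\mathbb{N}$-graded (Theorem~\ref{Ngrading}) and generated by the degree-one elements $v_1,v_2$, the images of $v_1,v_2$ are a basis of $L/[L,L]$, so $[L,L]$ is precisely the span of all basis monomials~\eqref{basisL} other than $v_1,v_2$, i.e.\ all $r_{n-4}v_n$ with $n\ge 3$. Thus it suffices to exhibit a finite set $S\subset[L,L]$ such that the subalgebra $\langle S\rangle$ contains every such monomial. I propose
$$S=\{\,v_3,\ v_4,\ t_0v_4,\ t_1v_5,\ t_0t_1v_5\,\}.$$

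The engine of the induction is the commutator formula~\eqref{com123} together with the pivot relations~\eqref{prod_pivot}. I would first note that $\langle v_3,v_4\rangle$ already contains every pivot $v_m$ with $m\ge 3$, since $v_{m+1}=[v_{m-1},v_m]$. Assuming inductively that $r_{m-4}v_m\in\langle S\rangle$ for every admissible tail and every $m\le n$ (with $n\ge 5$), I would produce an arbitrary monomial $r\,v_{n+1}$ with $r=t_0^{z_0}\cdots t_{n-3}^{z_{n-3}}$, splitting on the top slot $z_{n-3}$. If $z_{n-3}=0$, then $r$ involves only $t_j$ with $j\le n-4$, so $v_{n-1}(r)=0$ and~\eqref{com123} gives $[v_{n-1},r\,v_n]=r[v_{n-1},v_n]=r\,v_{n+1}$. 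If $z_{n-3}=1$, write $r=\bar r\,t_{n-3}$ with $\bar r=t_0^{z_0}\cdots t_{n-4}^{z_{n-4}}$; then $v_{n-2}(\bar r)=0$ and, using $[v_{n-2},v_n]=t_{n-3}v_{n+1}$ from~\eqref{prod_pivot}, formula~\eqref{com123} gives $[v_{n-2},\bar r\,v_n]=\bar r\,t_{n-3}v_{n+1}=r\,v_{n+1}$. In both cases the lower monomials $r\,v_n$, $\bar r\,v_n$ lie in $\langle S\rangle$ by induction and the acting pivots $v_{n-1},v_{n-2}$ (indices $\ge 3$) lie in $\langle v_3,v_4\rangle$, so $r\,v_{n+1}\in\langle S\rangle$. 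The base cases $n\le 5$ are immediate: $v_3,v_4,t_0v_4\in S$, while $v_5=[v_3,v_4]$ and $t_0v_5=[v_3,t_0v_4]$, and $t_1v_5,t_0t_1v_5$ are taken directly from $S$.

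The two points I expect to require care are exactly the two nonstandard features of the argument. The first is the top-slot case $z_{n-3}=1$: there one must bracket with $v_{n-2}$ rather than with $v_{n-1}$ in order to manufacture the extra tail letter $t_{n-3}$ through $[v_{n-2},v_n]=t_{n-3}v_{n+1}$, and one must check that the derivation term $v_{n-2}(\bar r)$ vanishes (which it does, as $v_{n-2}$ kills all $t_j$ with $j\le n-3$). The second, and the reason $S$ cannot be trimmed to $\{v_3,v_4,t_0v_4\}$, is the appearance of the $t_1$-slot: for $m\ge 3$ the derivation $v_m$ creates only tail letters $t_j$ with $j\ge m-1\ge 2$, so $\langle v_3,v_4,t_0v_4\rangle$ is free of $t_1$, whence $t_1v_5$ and $t_0t_1v_5$ must be inserted by hand; inductively they are unreachable because producing them would require a bracket with $v_2\notin[L,L]$. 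Granting these verifications, $\langle S\rangle$ contains every basis monomial distinct from $v_1,v_2$, so $\langle S\rangle=[L,L]$ and $[L,L]$ is finitely generated.
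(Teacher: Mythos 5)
Your proof is correct, and it reaches the paper's exact generating set $\{v_3,v_4,t_0v_4,t_1v_5,t_0t_1v_5\}$ by a genuinely different route. The paper argues top-down: writing $L=F/R$ with $F$ free on $v_1,v_2$, it invokes the cited fact that $[F,F]$ is itself a free Lie algebra with free generating set $\{ad(v_2)^j ad(v_1)^i([v_1,v_2])\}_{i,j\ge 0}$, so $[L,L]$ is generated by the images of these elements; a short computation shows that $a_{2,0}=a_{0,3}=a_{1,2}=0$, whence only five of the images are nonzero. You instead work bottom-up inside $L$: using the grading to identify $[L,L]$ with the span of the basis monomials \eqref{basisL} of degree at least $2$, and then climbing the pivot index by bracketing $r_{n-4}v_n$ with $v_{n-1}$ or $v_{n-2}$ according to whether the top tail slot is empty or occupied, with the derivation terms in \eqref{com123} checked to vanish. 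The trade-off is clear: the paper's argument is shorter and needs only a handful of bracket computations, but it leans on a nontrivial structural theorem about derived subalgebras of free Lie algebras; yours is entirely self-contained, using only the explicit basis and the multiplication rules \eqref{com123}, \eqref{prod_pivot} already established in Section~\ref{def-petro}, at the cost of a base-case and two-case inductive analysis. Your argument also has the side benefit of exhibiting explicitly how each basis monomial is reached from the generators, and your closing remark correctly explains why the two $t_1$-monomials cannot be dispensed with (the paper's computation of $a_{0,2}$ and $a_{1,1}$ encodes the same phenomenon). Both proofs are valid; yours could replace the paper's without loss.
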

\begin{proof}
	Indeed, let $F$ be the free Lie algebra with a free basis $v_1, v_2$. Set $v = [v_1, v_2]$.
Then by ~\cite{Ba} $[F,F]$ is a free Lie algebra with a free basis
$X = \{ ad(v_2)^j ad(v_1)^i (v), i \geq 0 , j \geq 0 \}$. Let $a_{i,j}$ be the image of $ad(v_2)^j ad(v_1)^i (v)$ in $L$.  Here $ad(y) (w) = [w,y]$.
Then
\begin{align*}
	a_{0,0} &= v_3, \quad a_{1,0} = [v_3, v_1] =  t_0 v_4, \quad a_{0,1} = [v_3, v_2] =  v_4,\\
	a_{2,0} &= [a_{1,0}, v_1] = [v_1, t_0 v_4] =  v_1(t_0) v_4  +  t_0 [v_1, v_4] =   0,\\
	a_{0,2} &= [a_{0,1}, v_2] =  [v_2, v_4] =  t_1 v_5,\\
    a_{0,3} &= [a_{0,2}, v_2] = [v_2, t_1v_5] = v_2(t_1) v_5 + t_1[ v_2, v_5]= 0,\\
	a_{1,1} &= [a_{1,0}, v_2] = [v_2, t_0 v_4] = v_2(t_0) v_4 + t_0 [v_2, v_4] =   t_0 t_1 v_5,\\
	a_{1,2} &= [a_{1,1}, v_2] = [v_2, t_0 t_1 v_5] = v_2(t_0 t_1) v_5 +  t_0 t_1 [v_2, v_5] =  0.
\end{align*}
The image of $X$ in $L$
yields a finite generating set  $\{v_3,v_4, t_0v_4,t_1v_5,t_0t_1v_5\}.$
\end{proof}

\begin{lemma} \label{fin-pres-lemma}
Let $K = N \leftthreetimes  Q$ be a finitely presented $\mathbb{N}$-graded Lie algebra,
where $N$ and $Q$ are $\mathbb{N}$-graded  subalgebras and $Q=kb$ one dimensional, where $ad(b) \in Der(N)$ is nilpotent.
Then $N$ is finitely presented.
	\end{lemma}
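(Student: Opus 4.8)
The plan is to use the homological characterization of finite presentability for $\mathbb{N}$-graded Lie algebras from Lemma~\ref{Ngraded}(b): since $N$ is $\mathbb{N}$-graded, it suffices to prove that both $H_1(N,k)$ and $H_2(N,k)$ are finite dimensional. Because $K$ is finitely presented, Lemma~\ref{Ngraded}(b) gives that $H_2(K,k)$ is finite dimensional, and this is the only input about $K$ that I will need. I would then feed the split extension $K = N \leftthreetimes Q$ with $Q = kb$ one dimensional into Lemma~\ref{split}(a), which produces the short exact sequence
$$0 \to H_0(Q, H_2(N,k)) \to H_2(K,k) \to H_1(Q, H_1(N,k)) \to 0.$$

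From this sequence the two outer terms are respectively a subspace and a quotient of $H_2(K,k)$, so both $H_0(Q, H_2(N,k))$ and $H_1(Q, H_1(N,k))$ are finite dimensional. Using Lemma~\ref{duality} with the one dimensional algebra $Q$ and the module $V = H_1(N,k)$, I can rewrite $H_1(Q, H_1(N,k)) \cong H^0(Q, H_1(N,k)) = H_1(N,k)^Q$. Thus I have reduced the problem to two statements of the same flavor: the $Q$-invariants of $H_1(N,k)$ are finite dimensional, and the $Q$-coinvariants of $H_2(N,k)$ are finite dimensional.

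The engine that upgrades these to full finite dimensionality is the nilpotency of $ad(b)$. Write $\mu$ for the action of $b$ on the relevant homology space $V$; this action is induced by $ad(b) \in Der(N)$. Since $ad(b)^n = 0$ on $N$ for some $n$, the derivation it induces on $\wedge^m N$ is again nilpotent (by the Leibniz rule a sufficiently high power forces some factor to be hit by $ad(b)^n$), and hence so is its action on the subquotient $H_m(N,k)$; in particular $\mu$ is nilpotent on both $H_1(N,k)$ and $H_2(N,k)$. I would then invoke the elementary linear algebra fact that for a nilpotent operator $\mu$ on a vector space $V$ with $\mu^n = 0$ one has $\dim V \leq n \cdot \dim \ker \mu$ and $\dim V \leq n \cdot \dim \mathrm{coker}\,\mu$: filtering $V$ by the kernels $\ker \mu^i$ (respectively the images $\mu^i V$) gives $n$ successive quotients that embed into $\ker\mu$ (respectively are quotients of $\mathrm{coker}\,\mu$), and summing them yields the bound. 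Applying the kernel bound to $V = H_1(N,k)$, whose invariants $\ker \mu = H_1(N,k)^Q$ are finite dimensional, shows $H_1(N,k)$ is finite dimensional; applying the cokernel bound to $V = H_2(N,k)$, whose coinvariants $\mathrm{coker}\,\mu = H_0(Q, H_2(N,k))$ are finite dimensional, shows $H_2(N,k)$ is finite dimensional.

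With both $H_1(N,k)$ and $H_2(N,k)$ finite dimensional, Lemma~\ref{Ngraded}(b) applied to the $\mathbb{N}$-graded algebra $N$ yields that $N$ is finitely presented. The main point requiring care is the bookkeeping in the third step: one must match the right dimension bound to the right homology group — the kernel (invariants) bound for $H_1$, which is only accessible after the duality isomorphism of Lemma~\ref{duality}, and the cokernel (coinvariants) bound for $H_2$, which appears directly in Lemma~\ref{split}(a) — together with checking that the nilpotency of $ad(b)$ on $N$ genuinely transfers to a uniform nilpotency of $b$ acting on each $H_i(N,k)$. Everything else is a direct assembly of the lemmas already established.
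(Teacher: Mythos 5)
Your proof is correct, and its skeleton matches the paper's: reduce via Lemma~\ref{Ngraded}(b) to showing $H_1(N,k)$ and $H_2(N,k)$ are finite dimensional, feed the split extension into Lemma~\ref{split}(a), and exploit the nilpotency of $ad(b)$. The two upgrade steps, however, are carried out differently. For $H_1(N,k)$ the paper never uses the quotient term of the short exact sequence: it observes directly that $H_0(Q,H_1(N,k)) = N/[N,K]$ sits inside the finite dimensional space $H_1(K,k)=K/[K,K]$, whereas you extract $H_1(Q,H_1(N,k))$ as a quotient of $H_2(K,k)$ and convert it to the invariants $H_1(N,k)^Q$ via Lemma~\ref{duality}; both are valid. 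For the passage from finite dimensional (co)invariants to finite dimensionality of the whole homology group, the paper argues that finite dimensional coinvariants of an $\mathbb{N}$-graded module force finite generation over $U(Q)$, and that the $U(Q)$-action factors through the finite dimensional algebra $k[b]/(b^s)$; you instead use the kernel and image filtrations of a nilpotent operator to get $\dim V \le n\,\dim\ker\mu$ and $\dim V \le n\,\dim \mathrm{coker}\,\mu$ directly. Your version is more elementary and does not need the grading at that stage (only the nilpotency), while the paper's graded-Nakayama phrasing is the one that would survive if $ad(b)$ were not nilpotent. The one point genuinely requiring care --- that $ad(b)^n=0$ on $N$ makes the induced derivation on $\wedge^m N$, and hence on the subquotient $H_m(N,k)$, nilpotent (of index at most $m(n-1)+1$ by the Leibniz rule) --- you identify and resolve correctly.
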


\begin{proof} We note first that $H_1(K,k) = K/ [K,K]$ is finite dimensional,
hence its subspace $H_0(Q, H_1(N,k)) = N/ ( [N,N] + [N,Q]) = N/ [N,K]$ is finite dimensional.
Thus $H_1(N,k)$ is finitely generated as $U(Q)$-module.
	
Consider the  short exact sequence  given by Lemma \ref{split}
$$ 0 \to H_0(Q, H_2(N,k)) \to H_2(K,k) \to H_1(Q, H_1(N, k)) \to 0$$
Since $K$ is finitely presented $H_2(K,k)$ is finite dimensional, hence $ H_0(Q, H_2(N,k)) $ is finite dimensional.
Since all modules (homologies) here are $\mathbb{N}$-graded ones  from $H_0(Q, H_2(N,k))$ finite dimensional
we deduce that $H_2(N,k)$ is finitely generated as $U(Q)$-module.
Since $Q = k b$ and $ad(b)^s = 0$ for some $s \geq 1$ (i.e.  $ad(b)$ is nilpotent),
we have that the action of $U(Q)$ on $H_2(N,k)$ factors through $k[b]/ ( b^s)$, hence $H_2(N,k)$ is finite dimensional.
Since $N$ is a $\mathbb{N}$-graded Lie algebra this means that $N$ is finitely presented.
\end{proof}

\noindent
{\bf Definition.}
{ \it We apply the shift $\tau^{i-1}$, $i\ge 1$, for all components~\eqref{decomp0} and get
\begin{equation}\label{decomp_i}
L_{i-1} = (A_i \leftthreetimes L_i) \leftthreetimes Q_i= M_i \leftthreetimes Q_i,\quad\text{where}\quad M_i: = A_i \leftthreetimes L_i
\end{equation}
and $L_0:=L$, $A_1:=A$. Thus,
$L_{i-1}:=\tau^{i-1} (L)=\tau^{i-1} (\langle v_1,v_2\rangle)= \langle v_{i}, v_{i+1}\rangle $,
$Q_i:=\tau^{i-1}(kv_1)= kv_i$, and
$A_i:=\tau^{i-1}A= \{ t_{i-1} t_i^{\alpha_i} \ldots t_j^{\alpha_j} v_{j+4} \mid  \alpha_i, \ldots, \alpha_j \in \{ 0, 1 \}  \}$.
}

\begin{cor} \label{cor-fin-pres}

a)  If $L$ is finitely presented then $M_1$, $L_1$ and $M_2$ are finitely presented.

b) If $\widetilde{L}$ is finitely presented then $L$ is finitely presented.
\end{cor}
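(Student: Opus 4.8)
The plan is to deduce all four finite-presentation statements from the one-dimensional fibering Lemma~\ref{fin-pres-lemma}, supplemented in exactly one place by the $H_2$-surjectivity of Lemma~\ref{newMay}. Throughout, every algebra in sight is $\mathbb{N}$-graded (Theorem~\ref{Ngrading}), and the relevant subalgebras $A_i$, $L_i$, $Q_i=kv_i$ and $k\partial_1$ are graded subalgebras, being spanned by homogeneous monomials from the bases \eqref{basisA}, \eqref{basisL0}, \eqref{basisL}. The nilpotency hypotheses required by Lemma~\ref{fin-pres-lemma} are available: $ad(v_i)^4=0$ for all $i\ge 1$ (recorded in Section~\ref{def-petro}), and $ad(\partial_1)^2=0$, since $\partial_1^2=0$ as an operator on $R$ (characteristic $2$, truncated polynomials) forces $ad(\partial_1)^2(E)=\partial_1^2 E + E\partial_1^2=0$ for every derivation $E$, the cross term $-2\partial_1 E\partial_1$ vanishing in characteristic $2$.

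For part a) I would read off the decompositions \eqref{decomp0} and \eqref{decomp_i} and treat the two outer cases first. We have $L=M_1\leftthreetimes Q_1$ with $Q_1=kv_1$ and $ad(v_1)^4=0$; since $L$ is assumed finitely presented, Lemma~\ref{fin-pres-lemma} applied with $K=L$, $N=M_1$, $Q=Q_1$ gives that $M_1$ is finitely presented. At the other end, once $L_1$ is known to be finitely presented, the shifted decomposition $L_1=M_2\leftthreetimes Q_2$ with $Q_2=kv_2$ and $ad(v_2)^4=0$ gives, by the same lemma, that $M_2$ is finitely presented.

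The middle case, $L_1$, is the one I expect to be the main obstacle, because it cannot go through Lemma~\ref{fin-pres-lemma} directly: in $M_1=A\leftthreetimes L_1$ the factor $L_1$ is the complement (the quotient $M_1/A$), not the ideal, whereas Lemma~\ref{fin-pres-lemma} transfers finite presentation only to the normal factor. Instead I would invoke the homological characterization. By Lemma~\ref{newMay}~a) the projection $M_1\to L_1$ induces an epimorphism $H_2(M_1,k)\to H_2(L_1,k)$; since $M_1$ is finitely presented and $\mathbb{N}$-graded, $H_2(M_1,k)$ is finite dimensional by Lemma~\ref{Ngraded}~b), hence so is its quotient $H_2(L_1,k)$. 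Moreover $L_1\cong L$ is $2$-generated, so $H_1(L_1,k)\cong L_1/[L_1,L_1]$ is finite dimensional as well. As $L_1$ is $\mathbb{N}$-graded, Lemma~\ref{Ngraded}~b) then yields that $L_1$ is finitely presented, which also supplies the hypothesis needed above for $M_2$.

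For part b) I would use the decomposition $\widetilde L=k\partial_1\rightthreetimes L_1$ from Theorem~\ref{just_infinite}, in which $L_1$ is the ideal and $k\partial_1$ the one-dimensional complement. Applying Lemma~\ref{fin-pres-lemma} with $K=\widetilde L$, $N=L_1$, $Q=k\partial_1$ and $b=\partial_1$ — using that $\widetilde L$ is finitely presented and $\mathbb{N}$-graded and that $ad(\partial_1)^2=0$ — shows that $L_1$ is finitely presented. Since $L_1=\tau(L)\cong L$ by Lemma~\ref{iso}, we conclude that $L$ is finitely presented. Here the only point demanding care is the \emph{global} nilpotency of $ad(\partial_1)$ (Lemma~\ref{fin-pres-lemma} needs $ad(b)^s=0$, not mere local nilpotency coming from the grading), and this is precisely what the identity $\partial_1^2=0$ delivers.
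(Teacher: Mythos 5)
Your proposal is correct, and for $M_1$ in part a) and for all of part b) it is the paper's argument verbatim: apply Lemma~\ref{fin-pres-lemma} to $L=M_1\leftthreetimes Q_1$ using $ad(v_1)^4=0$, respectively to $\widetilde L=k\partial_1\rightthreetimes L_1$ using $ad(\partial_1)^2=0$, and then identify $L_1=\tau(L)\cong L$. Where you diverge is in the treatment of $L_1$ and $M_2$ in part a). You declare $L_1$ to be ``the main obstacle'' and route around it via Lemma~\ref{newMay}~a) (the $H_2$-epimorphism induced by the projection $M_1\to L_1$) together with the homological characterization of finite presentation (Lemma~\ref{Ngraded}~b)), and then obtain $M_2$ from a further application of Lemma~\ref{fin-pres-lemma} to $L_1=M_2\leftthreetimes Q_2$. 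The paper's proof dispenses with both cases in one line: $L_1=\tau(L)$ and $M_2=\tau(M_1)$ are isomorphic copies of $L$ and $M_1$ under the injective shift $\tau$ of Lemma~\ref{iso}, and finite presentability is an isomorphism invariant, so $L_1$ is finitely presented because $L$ is (by hypothesis) and $M_2$ is finitely presented because $M_1$ is (just proved). The obstacle you perceived is therefore illusory --- and somewhat ironically, you invoke exactly the isomorphism $L_1=\tau(L)\cong L$ at the end of your part b). Your detour is nonetheless sound, and it does establish the slightly more general fact that in a graded split extension $B\leftthreetimes C$ the complement $C$ inherits finite presentation from the whole algebra, which the isomorphism trick does not give; but for this corollary it buys nothing that the shift does not already provide.
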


\begin{proof} a) Using~\eqref{decomp_i} in case $i=1$,
$L = M_1   \leftthreetimes Q_1$ where $Q_1 = k v_1$ and $ad(v_1) \in Der (M_1)$ nilpotent.
Recall that $L$ is $\mathbb{N}$-graded.
Applying Lemma \ref{fin-pres-lemma},  $M_1$ is finitely presented.
Then $L_1=\tau(L)$ and $M_2=\tau(M_1)$ are also finitely presented.
	
b) By item 3) of Theorem~\ref{just_infinite}, we get $\widetilde L=k\partial_1\rightthreetimes L_1$ and   $L_1\cong L$.
These algebras are $\mathbb N$-graded (Theorem~\ref{Ngrading}) and $\partial_1^2=0$.
It remains to apply Lemma \ref{fin-pres-lemma}.
\end{proof}

\begin{lemma} \label{abelianization}

a) The vector space $A_1/ [A_1, L_1]$ is spanned by  the  image of $[v_1, v_3]=t_0v_4$.

b) The vector space $M_1/[M_1, M_1]$ is spanned by images of $\{v_2, v_3, [v_1, v_3] = t_0 v_4\}$.
\end{lemma}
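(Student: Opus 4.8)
The plan is to prove part~(a) in detail and then deduce part~(b) formally.

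Since $A$ is an abelian ideal of $L$ by Lemma~\ref{Adef}, the subspace $[A,L_1]$ coincides with the image of the adjoint action of $L_1$ on $A$, so that $A_1/[A_1,L_1]=A/[A,L_1]$ is the space of coinvariants $H_0(L_1,A)$. I would establish two facts. First, that $A$ is cyclic as a $U(L_1)$-module, generated by $t_0v_4=[v_1,v_3]$; since the augmentation ideal of $U(L_1)$ is $L_1\,U(L_1)$, cyclicity yields $A=k\,(t_0v_4)+L_1\!\cdot\!A=k\,(t_0v_4)+[A,L_1]$, and hence $A/[A,L_1]$ is spanned by the image of $t_0v_4$ and has dimension at most $1$. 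Second, that this image is nonzero, which fixes the dimension at exactly $1$.

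The cyclic generation is the heart of the matter and the step I expect to require the most care. I would show by induction on $n$ that every basis monomial $t_0s\,v_n$ of $A$, with $s=t_1^{z_1}\cdots t_{n-4}^{z_{n-4}}$ as in~\eqref{basisA}, lies in $U(L_1)\,(t_0v_4)$; the base case $n=4$ is the generator itself. For the inductive step I would split on the top exponent $z_{n-4}$. If $z_{n-4}=0$, then $s$ involves only $t_j$ with $j\le n-5$, so $t_0s\,v_{n-1}$ is again a basis monomial of $A$; bracketing it with $v_{n-2}\in L_1$ and using~\eqref{com123} together with $[v_{n-2},v_{n-1}]=v_n$ gives
\[
[v_{n-2},\,t_0s\,v_{n-1}]=v_{n-2}(t_0s)\,v_{n-1}+t_0s\,v_n .
\]
Here the first summand vanishes because $v_{n-2}$ annihilates every $t_j$ with $j<n-2$ and hence kills $t_0s$, so the bracket equals $t_0s\,v_n$. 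If instead $z_{n-4}=1$, I would write $s=s'\,t_{n-4}$ with $s'$ involving only $t_j$, $j\le n-5$, and bracket $t_0s'\,v_{n-1}$ with $v_{n-3}\in L_1$; now~\eqref{prod_pivot} gives $[v_{n-3},v_{n-1}]=t_{n-4}v_n$, and the same index argument makes the error term $v_{n-3}(t_0s')$ vanish, so that
\[
[v_{n-3},\,t_0s'\,v_{n-1}]=t_0s'\,t_{n-4}v_n=t_0s\,v_n .
\]
In both cases the monomial bracketed against has pivot index $n-1$ and so lies in $U(L_1)(t_0v_4)$ by the inductive hypothesis, completing the induction. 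The delicate points are to keep each intermediate tail within its allowed index range, so that the intermediate monomials genuinely belong to $A$, and to check each time that the unwanted first summand is zero.

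For the nonvanishing of the image of $t_0v_4$ I would use the $\mathbb{N}$-grading of Theorem~\ref{Ngrading}: the monomial $t_0v_4=[v_1,v_3]$ lies in degree $3$, while the minimal degrees occurring in $A$ and in $L_1$ are $3$ (attained by $t_0v_4$) and $1$ (attained by $v_2$) respectively. Hence every bracket $[a,\ell]$ with $a\in A$ and $\ell\in L_1$ has degree at least $4$, so $[A,L_1]$ meets degree $3$ trivially and $t_0v_4\notin[A,L_1]$. This proves $A/[A,L_1]=k\,\overline{t_0v_4}$ and completes part~(a). Part~(b) is then formal: as $A$ is abelian and $M_1=A\oplus L_1$ as vector spaces, one has $[M_1,M_1]=[A,L_1]\oplus[L_1,L_1]$ with $[A,L_1]\subseteq A$ and $[L_1,L_1]\subseteq L_1$, whence $M_1/[M_1,M_1]\cong A/[A,L_1]\oplus L_1/[L_1,L_1]$. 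The first summand is spanned by the image of $t_0v_4$ by part~(a), and the second by the images of $v_2,v_3$ since $L_1=\langle v_2,v_3\rangle$ is two-generated (Lemma~\ref{iso}); together these give the claimed spanning set $\{v_2,v_3,\,[v_1,v_3]=t_0v_4\}$.
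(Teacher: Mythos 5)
Your proof is correct, but it reaches the key spanning statement by a genuinely different route from the paper. The paper proves part (b) first: since $M_1$ is the ideal of $L=\langle v_1,v_2\rangle$ generated by $v_2$, the quotient $M_1/[M_1,M_1]$ is spanned by the images of $\{ad(v_1)^j(v_2)\}_{j\ge 0}$ (citing \cite{Ba}), and a three-line computation shows $ad(v_1)^3(v_2)=0$, which gives the spanning set $\{v_2,v_3,t_0v_4\}$ at once; part (a) is then read off from the decomposition $M_1/[M_1,M_1]\simeq (A_1/[A_1,L_1])\oplus (L_1/[L_1,L_1])$ together with the same grading argument you use. You instead prove part (a) directly, by showing that $A$ is a cyclic $U(L_1)$-module generated by $t_0v_4$ through an induction over the basis monomials \eqref{basisA}, split according to the top exponent $z_{n-4}$. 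I checked both cases against \eqref{com123} and \eqref{prod_pivot}: the reductions $t_0sv_n=[v_{n-2},t_0sv_{n-1}]$ and $t_0sv_n=[v_{n-3},t_0s'v_{n-1}]$ are correct, the error terms $v_{n-2}(t_0s)$ and $v_{n-3}(t_0s')$ do vanish since all indices occurring in the tails are below the pivot index, the intermediate monomials are legitimate basis elements of $A$ with pivot $v_{n-1}$, and the passage from cyclicity to $A=k(t_0v_4)+[A,L_1]$ via the augmentation decomposition of $U(L_1)$ is sound. Your route is more hands-on and yields the stronger (and not isolated in the paper) statement that $A$ is cyclic over $U(L_1)$, at the cost of an explicit induction; the paper's route is shorter because the normal-closure generation fact packages the whole spanning argument into the computation of $ad(v_1)^j(v_2)$ for $j\le 3$. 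Both arguments share the direct-sum decomposition of $M_1/[M_1,M_1]$ and the degree argument showing that the image of $t_0v_4$ is actually nonzero.
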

\begin{proof}
Recall that $L = L_{0} = (A_1 \leftthreetimes L_1) \leftthreetimes Q_1$ and $A_1$ is an abelian ideal in $L$.
Then for $M_1 = A_1 \leftthreetimes L_1$ we have that
\begin{equation}\label{sum}
 M_1/ [M_1,M_1] \simeq (A_1/ [A_1, L_1]) \oplus (L_1/ [L_1, L_1]).
\end{equation}

Since  $M_1$ is the ideal of $L$ generated by $v_2$, $M_1/ [M_1,M_1]$
is spanned by the images of $\{ ad(v_1)^j (v_2) \}_{j \geq 0}$~\cite{Ba}.
Then we get
\begin{align*}
&ad(v_1) (v_2) =[v_2, v_1] =  v_3,\quad ad(v_1)^2 (v_2) = ad(v_1) (v_3) = [v_3, v_1] =  t_0 v_4,\\
&ad(v_1)^3(v_2) = [t_0 v_4, v_1] = [v_1, t_0 v_4] = v_1(t_0) v_4 + t_0[v_1, v_4] = 0.
\end{align*}
Hence,  $ M_1/ [M_1,M_1]$ is spanned by images of $\{v_2,v_3,t_0v_4\}$.

Since $L/ [L,L]$ has a $k$-basis $\{v_1, v_2\}$, applying the shift monomorphism $\tau:L\cong L_1$,
a $k$-basis of $L_1/ [L_1, L_1]$ is $\{v_2, v_3\}$, yielding  a basis of the second  summand of~\eqref{sum}.

By our computations, $t_0v_4$ has degree 3 in the generators $\{v_1,v_2\}$.
There is one more element of degree 3, namely $[[v_1,v_2],v_2]=v_4\notin A_1$.
So,  the  degrees of  the  monomials $X_A$ of $A_1=A$ are at least 3 and  the   degrees of monomials for $[A_1,L_1]$ are at least 4.
Using Theorem~\ref{Ngrading}, the image of $t_0v_4$ is a basis of the first component~\eqref{sum}.

\end{proof}

\begin{lemma} \label{additional-lemma} Let $0 \to W_0 \to W \mapnew{p} k \to 0$ be a short exact sequence
of right  $U(Q)$-modules, where  $ Q$ is one dimensional Lie algebra.
Assume further that there is an element $w \in H^0(Q, W) = \{ v \in W \ | \ v  Q = 0 \} $ such that $p(w) \not= 0$.
Then there is a short exact sequence of $k$-vector spaces of  coinvariants
$$0 \to (W_0)_{Q}  \to (W)_{Q}   \to (k)_{Q} = k \to 0.$$
\end{lemma}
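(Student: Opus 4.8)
The plan is to obtain the desired short exact sequence from the long exact sequence in homology attached to the given short exact sequence of right $U(Q)$-modules, after isolating what could obstruct exactness on the left. Applying the right-exact functor of coinvariants $(-)_Q = (-)\otimes_{U(Q)} k = H_0(Q,-)$ to $0 \to W_0 \to W \mapnew{p} k \to 0$ immediately produces the right-hand portion
$$(W_0)_Q \to (W)_Q \to (k)_Q \to 0,$$
and the identification $(k)_Q = k$ is part of the hypothesis. Hence the only point that genuinely requires an argument is that the map $(W_0)_Q \to (W)_Q$ is injective.

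First I would write down the relevant segment of the long exact sequence in homology,
$$\cdots \to H_1(Q, W) \mapnew{p_1} H_1(Q, k) \mapnew{\partial} H_0(Q, W_0) \to H_0(Q, W) \to H_0(Q, k) \to 0.$$
By exactness at $H_0(Q, W_0) = (W_0)_Q$, the kernel of $(W_0)_Q \to (W)_Q$ coincides with the image of the connecting homomorphism $\partial$. So it suffices to show $\partial = 0$, and by exactness at $H_1(Q, k)$ this is equivalent to showing that the induced map $p_1 : H_1(Q, W) \to H_1(Q, k)$ is surjective.

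To prove surjectivity of $p_1$ I would invoke Lemma \ref{duality} together with its naturality clause. The morphism $p : W \to k$ induces a commutative square relating $p_1$ to the map $p_2 : H^0(Q, W) \to H^0(Q, k)$ on invariants through the natural isomorphisms $\tau_W$ and $\tau_k$, namely $\tau_k \circ p_1 = p_2 \circ \tau_W$. The hypothesis supplies $w \in H^0(Q, W)$ with $p_2(w) = p(w) \neq 0$. Since $Q$ acts trivially on $k$ we have $H^0(Q, k) = k$, which is one-dimensional, so $p_2$ hits a nonzero element and is therefore surjective. Because $\tau_W$ and $\tau_k$ are isomorphisms, the commutative square forces $p_1$ to be surjective as well. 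Consequently $\partial = 0$, the map $(W_0)_Q \to (W)_Q$ is injective, and splicing this with the right-exact portion above yields the claimed short exact sequence of coinvariants.

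The argument is essentially formal once the ingredients are lined up; the one step deserving care, and where I expect to focus attention, is the passage from the hypothesis, which is phrased in terms of invariants $H^0$, to a statement about the first homology $H_1$ that can be fed into the long exact sequence. This is exactly what the naturality part of Lemma \ref{duality} is designed to provide, so the obstacle is mild: it amounts to correctly matching the induced maps $p_1$ and $p_2$ across the isomorphism $\tau$, and orienting the long exact sequence so that surjectivity of $p_1$ translates into vanishing of the connecting homomorphism $\partial$.
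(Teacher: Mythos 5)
Your proof is correct and follows essentially the same route as the paper's: both hinge on the long exact sequence in homology attached to $0 \to W_0 \to W \to k \to 0$ and on the naturality clause of Lemma \ref{duality} to convert the hypothesis on the invariant $w$ into the vanishing of the connecting map $\partial$. The only difference is cosmetic: you apply naturality to $p \colon W \to k$ and conclude directly that $p_1$ is surjective, whereas the paper assumes $\partial \neq 0$ and applies naturality to the inclusion $W_0 \to W$ to contradict $w \in H^0(Q,W) \setminus H^0(Q,W_0)$; your direct version is, if anything, slightly cleaner.
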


\begin{proof} Observe that $Q$ acts trivially
on $k \simeq W/ W_0$. Indeed $W/ W_0$ is a vector space spanned by $p(w)$ and since $Q$ acts trivially on $w$, $Q$ acts trivially on $p(w)$.

Recall that for any right $U(Q)$-module $D$ we have that
$D_Q = D \otimes_{U(Q)} k \simeq Tor_0^{U(Q)}(D, k).$
Then the long exact sequence in homology gives an exact complex
$$0= H_2(Q, k) \to H_1(Q, W_0) \to H_1(Q, W)  \to $$ $$H_1(Q, k) \mapnew{\partial} (W_0)_{Q}  \to (W)_{Q}   \to (k)_{Q} = k \to 0$$
Since $dim_k Q = 1$, $H_1(Q, k) \simeq Q/[Q,Q] = Q$ is 1-dimensional over $k$. There are two cases :

1. If $\partial = 0$ then $0 = Im (\partial) = Ker ((W_0)_{Q}  \to (W)_{Q})$, so the lemma is proved.

2. If $\partial \not= 0$ then $0 = Ker(\partial) = Im (H_1(Q, W)  \to H_1(Q, k) )$, hence $ H_1(Q, W_0) \to H_1(Q, W) $ is an isomorphism.
By Lemma \ref{duality} we conclude that the inclusion $W_0 \to W$ induces an isomorphism $H^0(Q, W_0) \to H^0(Q, W)$.
Identifying $W_0$ with its image in $W$ we conclude that  $H^0(Q, W_0) = H^0(Q, W)$, a contradiction with
$w \in H^0(Q, W) \setminus H^0(Q, W_0)$.
\end{proof}

\medskip
{\bf Proof of the Main Theorem } We split the proof in several steps.  Recall that
$
L_{i-1} = (A_i \leftthreetimes L_i) \leftthreetimes Q_i
$
was defined before Corollary  \ref{cor-fin-pres} and $L = L_0$, $M_i = A_i \leftthreetimes L_i$.

\medskip
Step 1.	By Lemma \ref{split} there is
	a short exact sequence
\begin{equation} \label{ses-kalt}  0 \to H_2(M_2,k)_{Q_2} \to H_2(L_1,k) \to H_1(Q_2, H_1(M_2,k)) \to 0 \end{equation}
 We will identify first $H_1(Q_2, H_1(M_2,k)) $, will show it is one dimensional and will find one element in $H_2(L_1,k)$ that maps
to a non-zero element of $H_1(Q_2, H_1(M_2,k))$.

Using the shift isomorphism $\tau: M_1 \to M_2$ and Lemma~\ref{abelianization}, we get
\begin{equation}\label{H1short}
H_1(M_2,k) \simeq M_2/ [M_2, M_2]  = k v_3 \oplus k v_4 \oplus k w_2,
\end{equation}
where $w_2 := [v_2, v_4] =  t_1 v_5$.  Since $Q_2=kv_2$, we can use Lemma \ref{duality} and get
$$H_1(Q_2, H_1(M_2,k)) \simeq H^0(Q_2, M_2/ [M_2, M_2]).$$

We proceed by calculating $H^0(Q_2, M_2/ [M_2, M_2])$, i.e. the subspace of elements of $M_2/ [M_2, M_2]$ on which the adjoint action of $Q_2$ is trivial.
Consider the  adjoint action of $Q_2=kv_2$ on the basis of $M_2 / [M_2, M_2] $ given by~\eqref{H1short}. 
It sends $v_3$ to $[v_3, v_2] =   v_4$, $v_4$ to $[v_4, v_2]  =  w_2 = t_1 v_5$ and 
$w_2$ to $[w_2, v_2] = [t_1 v_5, v_2] =  v_2(t_1) v_5  + t_1 [v_2, v_5] = 0 + 0 = 0 $.
Hence $$ H^0(Q_2, M_2/ [M_2, M_2]) = k w_2.$$

Since $[w_2, v_2] = 0$,  by  Lemma \ref{split}, b) applied for $B = M_2$, $C = Q_2$, $M = L_1$, $b_0 = w_2$, $c_0 = v_2$, we have
$w_2 \otimes v_2 \in M_2 \otimes Q_2$ is a cycle,
whose image in $H_2(M_2 \leftthreetimes Q_2,k)$
maps to a non-zero element of $H_1(Q_2, H_1(M_2,k))$ i.e. in $L_1 = M_2 \leftthreetimes Q_2$ we get
$$r := w_2\wedge  v_2\in L_1\wedge L_1, \qquad [r] \in H_2(L_1,k),$$
and the image of the latter (see~\eqref{ses-kalt})  in $H_1(Q_2, H_1(M_2,k))$ is non-zero.

\medskip
Step 2.
Recall that $$L = (A_1 \leftthreetimes L_1) \leftthreetimes Q_1.$$ Consider the adjoint action of $Q_1$ on $L_1$ modulo $A_1 =  A$.
This  induces action  of $Q_1$ on $H_2(L_1,k)$.
Recall that $$L_1 = M_2 \leftthreetimes Q_2$$ and note that  $M_2$ is $Q_1$-invariant  (under the action modulo $A_1$).

By (\ref{ses-kalt}) we consider $H_2(M_2,k)_{Q_2}$ as a $k$-vector subspace of $H_2(L_1,k)$.
In fact $H_2(M_2,k)_{Q_2}$ is the image of the map $\rho : H_2(M_2, k) \to H_2(L_1, k)$ induced by the embedding of $M_2$ in $L_1$.

By viewing $H_2(M_2,k)$ as a  subquotient of $M_2 \wedge M_2$ and  $H_2(L_1,k)$
as a subquotient of  $ L_1 \wedge L_1$  the map $\rho$ is induced by the embedding $M_2 \wedge M_2 \to L_1 \wedge L_1$,
but itself is not necessary embedding.
 Furthermore $M_2 \wedge M_2$ is a $Q_1$-submodule of $L_1 \wedge L_1$ via the diagonal $Q_1$-action.
Thus $H_2(M_2,k)_{Q_2}$  is a $Q_1$-invariant  $k$-vector subspace of $H_2(L_1,k)$.

Recall that  $[r]$ is the image of $r$ in $H_2(L_1,k)$ and assume that $Q_1$ acts trivially on $[r]$ i.e. sends it to zero.
Then by (\ref{ses-kalt}) and Lemma \ref{additional-lemma}
we get a short exact sequence
\begin{equation} \label{sofia} 0 \to
(H_2(M_2,k)_{Q_2} )_{Q_1} \to H_2(L_1,k)_{Q_1} \to H_1(Q_2, H_1(M_2,k))_{Q_1} \to 0\end{equation}
and since $Q_1$ acts trivially on $[r]$ we deduce that $$ H_1(Q_2, H_1(M_2,k))_{Q_1} \simeq  H_1(Q_2, H_1(M_2,k))$$ has dimension one.

We show now that $Q_1$ acts trivially on $[r]$.
Note that the relation $r$ in the homology group $H_2(L_1,k)$ is represented by the cycle $ w_2 \otimes v_2 \in M_2 \otimes Q_2 \subset L_1 \wedge L_1$.
And the $v_1$ action is the diagonal one in $L_1 \wedge L_1$ induced by the adjoin action modulo $A$
i.e. $[w_2, v_1] $ mod $A$ is $[t_1 v_5, v_1]  =  v_5 + t_1[v_5, v_1]  = v_5 + t_1 t_0 t_1 t_2 v_6 = v_5$
and $[v_2, v_1] =  v_3$, so $w_2 \wedge v_2$ is sent by the  (diagonal) action of $v_1$ to $ v_5 \wedge v_2 + w_2 \wedge v_3$.
We have to show  this is a boundary i.e. it is an image of $\wedge^3 L_1$
 under the differential~\eqref{diff3}
and this would imply that the action of $Q_1$ on $[r]$ is trivial. Note that
$$d_3(v_4 \wedge v_2 \wedge v_3) = [v_4, v_2] \wedge v_3 + [v_4, v_3] \wedge v_2 + [v_2, v_3] \wedge v_4 =$$
$$= w_2 \wedge v_3  + v_5 \wedge v_2 + v_4 \wedge v_4 = w_2 \wedge v_3  + v_5 \wedge v_2. $$

In the rest of step 2
we give an alternative proof of the fact that the action of $Q_1$ on $[r]$ is trivial  as follows:
consider $F$ the free Lie algebra on the letters $v_2$ and $v_3$ and think of $r = [v_3, v_2, v_2, v_2]$, a left normed commutator,
as an element of $F$. Let $$\varphi : F \to L_1$$ be the Lie algebra homomorphism that is the identity on $v_2$ and $v_3$
and let $R = ker(\varphi)$. Let  $\theta \in Der(L_1)$ be induced by $ad(v_1)$ (recall this is modulo $A$).
Note that in $L$ we have $[v_3, v_1] \in A$ and $[v_2, v_1] = v_3 \in L_1$, so $\theta(v_3) = 0$ and $\theta(v_2) =  v_3$.
Write $\widetilde{\theta} \in Der (F)$ such that $\widetilde{\theta}(v_3) = 0$ and $\widetilde{\theta}(v_2) = v_3$.
Then $$\widetilde{\theta}(r) = [\widetilde{\theta}(v_3), v_2, v_2, v_2] + [v_3, \widetilde{\theta}(v_2), v_2, v_2] + [v_3, v_2, \widetilde{\theta}(v_2), v_2] + [v_3, v_2, v_2, \widetilde{\theta}(v_2)] =$$ $$   0 + 0 +  [v_3, v_2, v_3, v_2] + [v_3, v_2, v_2, v_3] =  [[v_3, v_2], [v_3, v_2]] = 0.$$
Now by the Hopf formula $$H_2(L_1, k) \simeq ([F,F] \cap R)/ [R,F]$$ and the image of $r$ in the quotient $([F,F] \cap R)/ [R,F]$
is precisely $[r] \in H_2(L_1,k)$.

\medskip
Step 3.  By Lemma \ref{newMay}  applied for $M_2 = A_2  \leftthreetimes L_2$ there is an epimorphism
$$H_2(M_2,k) \mapnew{\pi_*} H_2(L_2,k)$$
induced by the epimorphism $M_2 \to L_2$
and a monomorphism
$$H_2(L_2,k) \mapnew{i_*} H_2(M_2,k)$$
induced by the inclusion map $L_2 \to M_2$ such that $\pi_* \circ i_* = id$.

Recall that $$L_1 = ( A_2  \leftthreetimes L_2)  \leftthreetimes Q_2 = M_2  \leftthreetimes Q_2$$ and $Q_2$ acts on $M_2$
via the adjoint action sending $A_2$ to $A_2$ (recall $A_2$ is an ideal in $L_1$) and sending  $L_2$ to $M_2 = A_2  \leftthreetimes  L_2$.
Hence the diagonal $Q_2$-action sends $\wedge^2 A_2$ to itself and sends $L_2 \otimes A_2$ to $(\wedge^2 A_2) \oplus (L_2 \otimes A_2) \subset \wedge^2 M_2$.
In particular considering $Ker(\pi_*)$ as a subquotient of $(\wedge^2 A_2) \oplus (L_2 \otimes A_2)$
we deduce that $Ker(\pi_*)$ is $Q_2$-invariant
 $k$-subspace of $H_2(M_2,k)$.  Then we have a short exact sequence of $Q_2$-modules
$$0 \to Ker (\pi_*) \to H_2(M_2,k) \mapnew{\pi_*} H_2(L_2,k) \to 0.$$
 Note that this induces an action of $Q_2$ on $H_2(L_2,k)$ that coincides with the action induced by the adjoint action of $Q_2$ on $L_2$
 modulo $A_2$ and the above exact sequence splits.
\noindent
This induces an exact sequence
 because the functor of coinvariants is right exact
\begin{equation} \label{ses-kalt2}
( Ker(\pi_*) )_{Q_2} \to H_2(M_2,k)_{Q_2} \to H_2(L_2,k)_{Q_2} \to 0,\end{equation}
where the first map is not necessary injective.

Let $$V = H_2(M_2,k)_{Q_2}.$$ As discussed in Step 2 $V$ is $Q_1$-invariant but it is important to stress that  the sequence (\ref{ses-kalt2})
 is not $Q_1$-invariant. Indeed the first module from the left in (\ref{ses-kalt2})
 is a subquotient of $(\wedge^2 A_2) \oplus (L_2 \otimes A_2)$ and the last module in (\ref{ses-kalt2}) is a subquotient of $\wedge^2 L_2$.
 The $Q_1$-action on $V$ is induced by the $Q_1$-action on $L_1 = M_2 \leftthreetimes Q_2$
 (that is the adjoint action reduced  modulo $A = A_1$),
 where $M_2$ is $Q_1$-invariant  and thus the $Q_1$-action sends $A_2$ to $L_2$ and sends $L_2$ to $0$. 
  Indeed since $\{ t_1t_2^{z_2} \ldots t_j^{ z_j} v_{j+4} \ | \ j \geq 1 \}$ is a basis of $A_2$ as $k$-vector space and 
 $[v_1, t_1t_2^{z_2} \ldots t_j^{ z_j} v_{j+4}] = v_1(t_1t_2^{z_2} \ldots t_j^{ z_j}) v_{j+4} \equiv t_2^{z_2} \ldots t_j^{ z_j} v_{j+4}$ 
 modulo $A$ we have that the $Q_1$-action sends $A_2$ to $L_2$. Since $L_2 = \langle v_3, v_4 \rangle$ and $[v_1, v_3] = t_0 v_4 \equiv 0 $ 
 modulo $A$ and $[v_1, v_4] = t_0 t_1 v_5 \equiv 0 $ modulo $A$ we conclude that the $Q_1$-action sends  $L_2$ to $0$.
 Thus (via the diagonal $Q_1$-action) $A_2 \wedge A_2$ goes to
  $L_2 \wedge A_2 \subset \wedge^2 M_2$,  $L_2 \otimes A_2$ goes to $\wedge^2 L_2$ and $\wedge^2 L_2$ goes to 0 i.e. (\ref{ses-kalt2})
 is not $Q_1$-invariant but \begin{equation} \label{newMay2} Q_1 \hbox{ acts trivially on }L_2 \wedge L_2, \hbox{ hence on }H_2(L_2,k)_{Q_2},\end{equation}
where we think of $H_2(L_2,k)_{Q_2}$ as a subquotient of $L_2 \wedge L_2$.

 Let  $W$ be a lifting of $H_2(L_2,k)_{Q_2}$ in $V = H_2(M_2,k)_{Q_2}$ that is a subquotient of $\wedge^2 L_2$.
 Since the $Q_1$-action ( mod $A_1$) on $\wedge^2 L_2$ is trivial, $Q_1$ acts trivially on $W$
 i.e. $$W \subseteq H^0(Q_1, V) = \{ c \in V ~ | ~ Q_1 \hbox{ acts trivially on } c \}.$$

 Assume that $L$ is {\it finitely presented}.
 Then by Corollary \ref{cor-fin-pres} $M_1$, $L_1$ and $M_2$ are finitely presented and $H_2(M_2, k)$ is finite dimensional.
 In particular $V$ is finite dimensional.

Let $\mu : V \to V$ be the map that sends an element to its image under the action (induced by the adjoint action mod $A_1$)
of the generator $v_1$ of $Q_1$. Then $$H_0(Q_1, V) = Coker ( \mu)\hbox{ and }H^0(Q_1, V) = Ker (\mu).$$
Since $V$ is finite dimensional we have $dim_k Ker(\mu) = dim_k Coker( \mu)$, hence
$$
dim_k \ (H_2(M_2,k)_{Q_2} )_{Q_1} = dim_k \  V _{Q_1} =  dim_k  \ Coker( \mu) = dim_k \  Ker(\mu) $$ $$
=dim_k \ H^0(Q_1, V) \geq dim_k W = dim_k \ H_2(L_2,k)_{Q_2}.
$$
Then by (\ref{sofia}) and since by Step 1 $dim_k H_1(Q_2, H_1(M_2,k)) = 1$  we obtain
\begin{align*} \label{important*}
dim_k \ H_2(L_1, k)_{Q_1} &= 1+  dim_k \ (H_2(M_2,k)_{Q_2} )_{Q_1} \geq 1 + dim_k \ H_2(L_{2},k)_{Q_{2}}\\
&=1 + dim_k \ H_2(L_{1},k)_{Q_{1}},
\end{align*}
the last equality being valid by application of the shift isomorphism $\tau: L_1\to L_2$.
Hence $dim_k \ H_2(L_{1},k)_{Q_{1}}=\infty$ and we get
\begin{equation*} 
dim_k \ H_2(L, k)=dim_k \ H_2(L_1, k)  \geq  dim_k \ H_2(L_1, k)_{Q_1} =  \infty.
\end{equation*}
But the assumption that $L$ is finitely presented implies that $dim_k \ H_2(L, k) < \infty$.
The contradiction proves that $L$ is not finitely presented.

\medskip
Step 4. By Corollary \ref{cor-fin-pres}, part b) $\widetilde{L}$ is not finitely presented as well.



\begin{thebibliography}{99}
\bibitem{Ba}
   Yu. A. Bahturin,
   {Identical relations in Lie algebras},
   VNV Science Press, Utrecht, 1987;
   2nd edition. Berlin: De Gruyter, 2021.
\bibitem{Bartholdi03} L. Bartholdi,
     Endomorphic presentations of branch groups. {\it J. Algebra} {\bf 268} (2003), no. 2, 419--443.
\bibitem{Bartholdi} L. Bartholdi, Self-similar Lie algebras, J. Eur. Math. Soc. (JEMS) 17 (2015), 3113 - 3151
\bibitem{BokChen07}
   L. A. Bokut, Y. Chen,  Gr\"obner-Shirshov bases for Lie algebras: after A. I. Shirshov.
   {\it Southeast Asian Bull. Math}. {\bf 31} (2007), no. 6, 1057--1076.
\bibitem{BokChen14}
   L. A. Bokut, Y. Chen., Gr\"obner-Shirshov bases and their calculation.
   {\it Bull. Math. Sci.} {\bf 4} (2014), no. 3, 325--395.
\bibitem{BokKle96}
   L. A. Bokut, A. A.  Klein,  Serre relations and Gr\"obner-Shirshov bases for simple Lie algebras. I, II.
   {\it Internat. J. Algebra Comput}. {\bf 6} (1996), no. 4, 389--400, 401--412.
\bibitem{PeOtto}
   O. A. de Morais Costa, V. Petrogradsky,
   Fractal just infinite nil Lie superalgebra of finite width, {\it J. Algebra}, {\bf 504}, (2018), 291--335.
\bibitem{F-K-S} V. Futorny, D. Kochloukova, S. Sidki, On self-similar Lie algebras and virtual endomorphisms,  Math. Z. 292 (2019), 3 - 4, 1123 - 1156

\bibitem{Grigorchuk80}
    R. I., Grigorchuk,
    On the Burnside problem for periodic groups.,
    {\it Funktsional. Anal. i Prilozhen}. {\bf 14} (1980), no. 1, 53--54.
\bibitem{Grig99}
     R. I. Grigorchuk,
    On the system of defining relations and the Schur multiplier of periodic groups generated by finite automata.
    in: Groups St. Andrews 1997 in Bath, I, 290--317,
    London Math. Soc. Lecture Note Ser., 260, Cambridge, 1999.
\bibitem{G-S} N. Gupta, S. Sidki, On the Burnside problem for periodic groups, Math. Z. 182 (3) (1983), 385 - 388


\bibitem{Kac}
    V. G. Kac,
    Infinite-dimensional Lie algebras. Cambridge University Press, Cambridge, 1990.

\bibitem{Kos15}
  D. Kocak, Finitely presented quadratic algebras of intermediate growth,
  {\it Algebra Discrete Math}. {\bf 20} (1) (2015) 69--88.
\bibitem{Kos17}
  D. Kocak, Intermediate growth in finitely presented algebras.
 {\it Internat. J. Algebra Comput.} {\bf 27} (2017), no. 4, 391--401.

\bibitem{K-MP} D. Kochloukova, C. Martinez-Perez,
  Coabelian ideals in $\mathbb{N}$-graded Lie algebras and applications to right angled Artin Lie algebras,  Israel J. Math. 247 (2022), no. 2, 797–829
\bibitem{K-MP2}  D. Kochloukova, C. Martinez-Perez, Subdirect sums of Lie algebras, Math. Proceedings of Cambridge Phil. Soc.,  166 (2019), 1 ,  147 - 171
\bibitem{K-S} D. Kochloukova, S. Sidki, Self-similar groups of type $FP_n$,  Geometriae Dedicata, 204 (2020), 241–264

\bibitem{LeiPol}
   D. Leites, E. Poletaeva,
    Defining relations for classical Lie algebras of polynomial vector fields.
    {\it Math. Scand.}  {\bf 81} (1997), no. 1, 5--19. (1998).
\bibitem{Lys85}
   I. G. Lysenok,
   A system of defining relations for a Grigorchuk group.
   {\it Math. Notes} {\bf 38} (1985), 784--792 ;
   translation from {\it Mat. Zametki} {\bf 38} (1985), No.4, 503--516.
\bibitem{Petr}  V. M. Petrogradsky,
Examples of self-iterating Lie algebras , J. Algebra 302
(2006), no. 2, 881 - 886.
\bibitem{Pe16}
   V. Petrogradsky,
   Fractal nil graded Lie superalgebras,
   {\it J. Algebra}, {\bf 466} (2016), 229--283.
\bibitem{Petr-Shest}  V. M. Petrogradsky, I. P. Shestakov,
Examples  of self-iterating Lie algebras, 2, J. Lie Theory
19 (2009), no. 4, 697 - 724
\bibitem{Petr-Shest2}
V. M. Petrogradsky, I. P. Shestakov, On properties of a Fibonacci restricted Lie algebra,
J. Lie Theory 23 (2013), no. 2, 407 - 431
\bibitem{P-S-Z} V. M. Petrogradsky, I. P. Shestakov, E. Zelmanov, Nil graded self-similar Lie algebras, Groups, Geom. Dyn. 4 (2010), 873 - 900
\bibitem{Rotman}
{ J. J. Rotman}
{ An Introduction to Homological Algebra.}
Springer, Second Edition,  2009
\bibitem{Shest-Zel}  I. P. Shestakov, E. Zelmanov,
Some  examples  of  nil  Lie  algebras, J. Eur. Math. Soc. (JEMS) 10 (2008), no. 2, 391 - 398
\bibitem{Sidki87}
  S. Sidki.
  On a 2-generated infinite 3-group: the presentation problem.
  {\it J. Algebra} {\bf 110} (1987), no. 1, 13--23.

\bibitem{Stewart75}
   I. Stewart,
   Finitely presented infinite-dimensional simple Lie algebras.
   {\it Arch. Math. (Basel)} {\bf 26} (1975), no. 5, 504--507.
\bibitem{Ufn80}
   V. A. Ufnarovskii,
  Poincare series of graded algebras,
  {\it Mat. Zametki} {\bf 27}(1) (1980) 21--32.

\bibitem{Weibel}
 { C. A. Weibel.}
 { An Introduction to Homological Algebra.} Cambridge University Press, 1994.
\bibitem{Thomas} T. Weigel,
Graded Lie algebras of type FP, Israel J. Math. 205 (2015),  1, 185 - 209

\end{thebibliography}
\end{document}